\newcommand{\rac}{\mathbb Q}
\newcommand{\bind}{\textrm{Ind}}
\newcommand{\bres}{\textrm{Res}}
\newcommand{\bdef}{\textrm{Def}}
\newcommand{\binf}{\textrm{Inf}}
\newcommand{\bten}{\textrm{Ten}}
\theoremstyle{plain}
\newtheorem{teo}{Theorem}[section]
\newtheorem{prop}[teo]{Proposition}
\newtheorem{coro}[teo]{Corollary}
\newtheorem{lema}[teo]{Lemma}
\theoremstyle{definition}
\theoremstyle{remark}
\newtheorem{rem}[teo]{Remark}
\newtheorem{ejem}[teo]{Example}
\title{Deflation and tensor induction on the Frobenius-Wielandt morphism}
\author{Nadia Romero\footnote{\texttt{nadia.romero@ugto.mx}}\\ 
\begin{small}
Departamento de Matem\'aticas,
\end{small}\\
\begin{small}
Universidad de Guanajuato, Mexico.
\end{small}
}
\date{ }
\begin{document}

\maketitle

\begin{abstract}
We explore conditions for the Frobenius-Wielandt morphism to commute with the operations of deflation and tensor induction on the Burnside ring. In doing this, we review the commutativity with induction. The techniques used for induction and tensor induction no longer work for deflation, so in this case we make use of tools coming from the theory of biset functors.
\end{abstract}

\section*{Introduction}

The Frobenius-Wielandt morphism was introduced by Dress, Siebeneicher and Yoshida in \cite{lostres}. For a finite group $G$, it is a ring homomorphism $\alpha^G$ from the Burnside ring $B(C)$, of a cyclic group $C$ of order $|G|$, to the Burnside ring $B(G)$ of $G$. The main property of $\alpha^G$ is that for any element $x$ in $B(C)$ and any subgroup $K$ of $G$, the number of fixed points of $\alpha^G(x)$ by $K$ equals the number of fixed points of $x$ by ${C_{|K|}}$, where $C_{|K|}$ is the subgroup of $C$ of order $|K|$. According to the authors (see Remark 2 in \cite{lostres}), this gives a precise conceptual interpretation of
the observation that many elementary group-theoretic results can be derived from the fact that various invariants of an arbitrary group are closely related to the same invariant evaluated for
the cyclic group $C$.

In Section 5 of \cite{lostres} the functorial properties of $\alpha^G$ are considered. It is shown that $\alpha^G$ commutes with restriction, $\bres^G_H:B(G)\rightarrow B(H)$, for every subgroup $H$ of $G$, and that it commutes with the operation of fixed points $\textrm{Fix}^G_{G/N}:B(G)\rightarrow B(G/N)$ for every normal subgroup $N$ of $G$. On the other hand, it is also shown that $\alpha^G$ commutes with inflation $\binf^{\,G}_{G/N}: B(G/N)\rightarrow B(G)$ for $N$ a normal subgroup of $G$, if and only if $G$ and $N$ satisfy that for any subgroup $K$ of $G$, we have $|K\cap N|=g.c.d.(|K|,\, |N|)$. As we will see, this property on $G$ and $N$ will be very important, we will call it \textit{the gcd property}. We will show that the Frobenius-Wielandt morphism commutes with the operation of tensor induction $\bten_H^G:B(H)\rightarrow B(G)$ if and only if $G$ and $H$ have the gcd property, and that the same condition applies for the induction $\bind_H^G: B(H)\rightarrow B(G)$. The result for induction can be obtained from the results appearing in Section 5 of \cite{lostres} too, although the commutativity is not considered in this way there.

As usual, the operation of deflation $\bdef_{G/N}^{\,G}:B(G)\rightarrow B(G/N)$, for $N$ a normal subgroup of $G$, is much more complicated. An example of the fact that the Frobenius-Wielandt morphism does not commute in general with deflation is given in \cite{miguel}. So, we searched for a sufficient and necessary condition for this commutativity. Nevertheless, the methods used to do this on induction and tensor induction do not work for deflation, mainly because there is no formula for the number of fixed points of a deflated $G$-set. In spite of this, we  prove that if the Frobenius-Wielandt morphism commutes with $\bdef_{G/N}^{\,G}$, then $G$ and $N$ have the gcd property and $N$ is a cyclic central subgroup of $G$. We do this by means of the constants $m_{G,\, N}$, introduced by Serge Bouc in \cite{foncteurs} to study the Burnside biset functor. We finish this note by giving some examples for which the Frobenius-Wielandt morphism commutes with deflation under certain conditions.

\section{Preliminaries}

We begin by introducing some notation. In what follows $G$ is a finite group. 

The conjugate $aga^{-1}$ of an element $g\in G$ by $a\in G$ will be denoted by $^ag$, and $g^a$ will stand for $a^{-1}ga$. We also write $^a\!K$ and $K^a$ for the conjugates of a subgroup $K$ of $G$. A set of representatives of the conjugacy classes of subgroups of $G$ will be denoted by $[s_G]$.

The Frattini subgroup of a group $G$ is denoted by $\Phi (G)$, and we denote by $\Delta (G)$ the group $\{(a,\, a)\mid a\in G\}$. 

The greatest common divisor of two integers $n$ and $m$ will be denoted simply by $(n,\, m)$. If $n$ is an integer and $p$ is a prime number, we denote by $n_p$ the highest power $p^k$ dividing $n$.


Groups satisfying any of the conditions  of the following lemma will be important in the next sections, so we prove a couple of results about them.

\begin{lema}
\label{propiedad}
Let $N$ be a subgroup of $G$. The following are equivalent:
\begin{itemize}
\item[i)] For every $H\leqslant G$ we have $|H\cap N|=(|H|,\, |N|)$.
\item[ii)] If  $H\leqslant G$ is such that $|H|$ divides $|N|$, then $H\subseteq N$.
\item[iii)] If  $H\leqslant G$ is a cyclic group such that $|H|$ divides $|N|$, then $H\subseteq N$.
\item[iv)] For every cyclic subgroup $H$ of $G$ we have $|H \cap N|=(|H|,\, |N|)$.
\end{itemize}
\end{lema}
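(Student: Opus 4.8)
The plan is to establish the four equivalences as a single cycle of implications $i)\Rightarrow iv)\Rightarrow iii)\Rightarrow ii)\Rightarrow i)$, which lets each arrow reuse one elementary observation: for any subgroup $H$ the intersection $H\cap N$ is contained in both $H$ and $N$, so by Lagrange's theorem $|H\cap N|$ always divides $(|H|,\,|N|)$. Thus in every place where an equality $|H\cap N|=(|H|,\,|N|)$ must be proved, only the reverse divisibility $(|H|,\,|N|)\mid|H\cap N|$ carries any content.

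The three short arrows I expect to dispatch directly. For $i)\Rightarrow iv)$ there is nothing to do, since $iv)$ is the restriction of $i)$ to cyclic subgroups. For $iv)\Rightarrow iii)$, if $H$ is cyclic with $|H|$ dividing $|N|$ then $(|H|,\,|N|)=|H|$, so $iv)$ gives $|H\cap N|=|H|$ and hence $H\subseteq N$. For $iii)\Rightarrow ii)$, given any $H$ with $|H|$ dividing $|N|$ I would argue elementwise: each $h\in H$ generates a cyclic group $\langle h\rangle$ whose order divides $|H|$ and therefore $|N|$, so $iii)$ forces $\langle h\rangle\subseteq N$ and in particular $h\in N$; as $h$ was arbitrary, $H\subseteq N$.

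The main obstacle is the closing arrow $ii)\Rightarrow i)$, because here $H$ need not be cyclic and one cannot simply exhibit a subgroup of $H$ of order $(|H|,\,|N|)$. I would instead argue one prime at a time, using the notation $n_p$ for the $p$-part of an integer. Fix a prime $p$ and set $d=(|H|,\,|N|)_p=\min(|H|_p,\,|N|_p)$. By Sylow's theorem $H$ has a Sylow $p$-subgroup $P$ with $|P|=|H|_p$, and since $p$-groups possess subgroups of every order dividing their own, $P$ contains a subgroup $Q$ with $|Q|=d$. As $d$ divides $|N|$, hypothesis $ii)$ gives $Q\subseteq N$, so $Q\subseteq H\cap N$ and therefore $d\mid|H\cap N|$. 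Letting $p$ range over all primes and multiplying these pairwise coprime contributions yields $(|H|,\,|N|)\mid|H\cap N|$, which together with the universal reverse divisibility noted above gives the desired equality and closes the cycle.
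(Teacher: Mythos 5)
Your proposal is correct and follows essentially the same route as the paper: the same cycle of implications $iv)\Rightarrow iii)\Rightarrow ii)\Rightarrow i)\Rightarrow iv)$, with the elementwise cyclic-subgroup argument for $iii)\Rightarrow ii)$ and the prime-by-prime Sylow argument for $ii)\Rightarrow i)$. The only difference is that you make explicit the two easy steps the paper dismisses as ``clear'' and the underlying fact that $|H\cap N|$ always divides $(|H|,|N|)$, which the paper uses implicitly.
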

\begin{proof}
Clearly $iv)$ implies $iii)$. 
Suppose $iii)$ and let $H$ be a subgroup of $G$ of order dividing $|N|$. If $x\in H$, then $<x>$ is a cyclic subgroup of $G$ of order dividing $|N|$, thus $x\in N$. Hence we have $iii)\Rightarrow ii)$.

Now suppose $ii)$ and let $p$ be a prime that divides $|N|$ and $|H|$. Let $p^r$ be  $(|H|,\, |N|)_p$. Then $H$ has a subgroup $H_1$ of order $p^r$, since its $p$-Sylow subgroup does, but by $ii)$ we have $H_1\subseteq N$, hence $H_1\subseteq H\cap N$. This implies that $p^r$ divides $|H\cap N|$ and then $|H\cap N|=(|H|,\, |N|)$. Thus we have $ii)\Rightarrow i)$ and clearly $i)\Rightarrow iv)$.
\end{proof}

%

As said in the Introduction, we will refer to the property of the previous lemma as  \textit{the gcd property}. Observe that if $N\leqslant G$  satisfy the gcd property, then $N$ must be normal in $G$. A characterization of the groups $G$ and $N$ satisfying this property is given in \cite{lostres} without proof, for the sake of completeness, we prove it next. Notice that this characterization shows that the gcd property depends on the couple $G$ and $N$. Nevertheless, for simplicity sometimes we will just say that $N$ has the gcd property.

\begin{lema}
\label{syss}
Let $N$ be a normal subgroup of $G$. Then $G$ and $N$ have the gcd property if and only if for each prime $p$ diving $|G|$ we have one of the following three possibilities: 
\begin{itemize}
\item[i)] $|N|_p=1$.
\item[ii)] $|N|_p=|G|_p$.
\item[iii)] The Sylow $p$-subgroups of $G$ are cyclic or generalized quaternion groups, and if they are generalized quaternion groups, then $|N|_2=2$.
\end{itemize}
\end{lema}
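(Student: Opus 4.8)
The plan is to show that the gcd property decouples completely into independent conditions at each prime, and then to analyse the resulting condition on a single Sylow subgroup using the classical structure theory of $p$-groups.

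First I would pass to the formulation in Lemma~\ref{propiedad}: $G$ and $N$ have the gcd property if and only if every cyclic subgroup $H$ with $|H|$ dividing $|N|$ is contained in $N$. Writing a cyclic $H$ as the internal direct product of its Sylow subgroups $H_p$, and noting that $|H|\mid|N|$ iff $|H_p|\mid|N|_p$ for all $p$ while $H\subseteq N$ iff each $H_p\subseteq N$, the property splits as a conjunction over the primes $p$ dividing $|G|$ of the local statement $(*_p)$: every cyclic $p$-subgroup of $G$ of order dividing $|N|_p$ lies in $N$. Fix $p$, put $|N|_p=p^a$ and $|G|_p=p^b$, choose $P\in\mathrm{Syl}_p(G)$ and set $N_p=N\cap P$, which is a Sylow $p$-subgroup of $N$ since $N\trianglelefteq G$, so $|N_p|=p^a$. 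As every $p$-subgroup is conjugate into $P$ and $N$ is normal, $(*_p)$ is equivalent to asking that every $x\in P$ with $x^{p^a}=1$ lie in $N_p$. Because $|N_p|=p^a$ already forces $N_p\subseteq\{x\in P:x^{p^a}=1\}$, the local condition becomes exactly
\[
\{x\in P:x^{p^a}=1\}=N_p,\qquad\text{equivalently}\qquad \#\{x\in P:x^{p^a}=1\}=p^a.
\]

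For the forward direction I fix $p$, assume $(*_p)$, and must land in case i), ii) or iii). The cases $a=0$ and $a=b$ are exactly i) and ii), so suppose $0<a<b$. Then the solution set above has order $p^a<|P|$, so $P$ has an element of order at least $p^{a+1}$; a suitable power gives $g$ of order $p^{a+1}$. Now $g^p$ has order $p^a$, hence $g^p\in N_p$, and since $\langle g^p\rangle$ already has order $p^a=|N_p|$ I conclude $N_p=\langle g^p\rangle$ is cyclic. Every element of $P$ of order $p$ satisfies $x^{p^a}=1$, so it lies in the cyclic group $N_p$, which has a unique subgroup of order $p$; therefore $P$ has a unique subgroup of order $p$. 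By the classical classification of $p$-groups with a unique subgroup of order $p$, this forces $P$ to be cyclic or, when $p=2$, generalized quaternion, which is the first half of iii). If $P$ is generalized quaternion and $a\geq2$, then the more than two elements of order $4$ lying outside its cyclic maximal subgroup would all have to sit inside the cyclic group $N_p$ of order $2^a$, which has only two elements of order $4$; this contradiction forces $a=1$, i.e. $|N|_2=2$.

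For the converse I would simply verify $\#\{x\in P:x^{p^a}=1\}=p^a$ in each of the three cases: it is immediate when $a=0$ or $a=b$; when $P$ is cyclic the solutions of $x^{p^a}=1$ form the unique subgroup of order $p^{\min(a,b)}=p^a$; and when $P$ is generalized quaternion with $a=1$ the only solutions of $x^2=1$ are $1$ and the central involution. Thus $(*_p)$ holds for every $p$, giving the gcd property. The main obstacle is the localisation step together with the correct identification of $(*_p)$ with a count of solutions of $x^{p^a}=1$; once this is in place the heart of the matter — that a proper subgroup exhausting all elements of order dividing $p^a$ must be cyclic — is a short direct deduction, and the only external input is the structural classification of $p$-groups with a unique subgroup of order $p$.
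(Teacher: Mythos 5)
Your proof is correct, and while its skeleton (localise at each prime, then classify the Sylow $p$-subgroup when $0<|N|_p<|G|_p$) matches the paper's, the key step is reached by a genuinely different route. In the \emph{if} direction your count of solutions of $x^{p^a}=1$ in a cyclic or generalized quaternion group is essentially the paper's argument via uniqueness of subgroups of prescribed order, just packaged more explicitly through the local conditions $(*_p)$ extracted from Lemma~\ref{propiedad}(iii). The real divergence is in the \emph{only if} direction: the paper notes that $P\cap N$ is the unique subgroup of $P$ of its order and then proves, by induction on $n-k$ together with the theorem that a non-abelian $p$-group all of whose abelian subgroups are cyclic is cyclic or generalized quaternion, that a $p$-group with a unique subgroup of some intermediate order must be cyclic or generalized quaternion. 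You instead first show that $N\cap P$ is itself cyclic --- it equals $\langle g^p\rangle$ for an element $g$ of order $p^{a+1}$, which exists because the solution set of $x^{p^a}=1$ has only $p^a<|P|$ elements --- and from this deduce that $P$ has a unique subgroup of order $p$, so the standard classification of such $p$-groups applies directly. This short-circuits the paper's induction, isolates exactly which classical input is used, and yields the cyclicity of $N\cap P$ as a byproduct; your count of elements of order $4$ to force $|N|_2=2$ in the quaternion case is likewise a clean substitute for the paper's appeal to the uniqueness of the involution. Both arguments are sound; yours trades the paper's self-contained induction for a slightly cleaner reduction to a single well-known theorem.
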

\begin{proof}
First we prove the \textit{if} part.  
To prove  $|H\cap N|=(|H|,\, |N|)$ for $H\leqslant G$, it suffices to prove that for each prime $p$ we have that $(|H|,\, |N|)_p$ divides $|H\cap N|$. To do this, we let $H_1$ be a subgroup of $H$ or order $p^r=(|H|,\, |N|)_p$ and we show that in each of the three cases of the statement, $H_1$ is contained in $H\cap N$. 
If $|N|_p=1$ then this is clear.  Next suppose $|N|_p=|G|_p$. Then, every Sylow $p$-subgroup of $G$ is contained in $N$ and so $H_1$ is contained in $H\cap N$. Finally, suppose we have $iii)$. Let  $P$ be a Sylow $p$-subgroup of $G$, containing $H_1$. Since $P\cap N$ is a Sylow $p$-subgroup of $N$, we have $|P\cap N|=|N|_p$, and so $p^r$ divides $|P\cap N|$, thus $P\cap N$ has  a subgroup of order $p^r$. If $P$ is cyclic, then $H_1$ is its unique subgroup of order $p^r$ and hence it is  contained in $H\cap N$. If $P$ is a generalized quaternion and $|N|_2=2$, then $p^r$ is either 1 or 2, in either case $H_1$ is contained in $H\cap N$, since a generalized quaternion group has a unique subgroup of order 2.

Now suppose that $N\leqslant G$ satisfy the gcd property, and let $p$ be a prime dividing $|G|$ with $|G|_p=p^n$. If neither $i)$ nor $ii)$ are satisfied, then $|N|_p=p^k$ with $0<k<n$. Let $P$ be a Sylow $p$-subgroup of $G$ and $P_k=P\cap N$. Observe that, by point $ii)$ of the previous lemma, $P_k$ is the only subgroup of $P$ of order $p^k$.
We will show that this is all what is needed for $P$ to be of the type described in $iii)$. That is, we show that if $Q$ is a finite $p$-group (of order $p^n$, as before) containing a unique non-trivial, proper subgroup, of order $p^k$, then $Q$ is either cyclic or generalized quaternion. We show this by induction on $n-k$. If $n=k+1$, then $Q$ contains a unique maximal subgroup, thus $Q$ must be cyclic. Suppose now $n-k>1$, then, by induction, every maximal subgroup of $Q$ is either cyclic or generalized quaternion. If $Q$ is abelian, then  it must be cyclic. If $Q$ is not abelian, then any abelian subgroup of $Q$ is contained in a maximal subgroup and hence is cyclic, thus, by Theorem 4.10 in \cite{gore} $Q$ is cyclic or generalized quaternion. Also, if $p=2$ and $Q$ is a generalized quaternion, then $p^k$ must be equal to $2$ since only in this case a generalized quaternion group has a unique non-trivial proper subgroup.
\end{proof}

\subsection{The Burnside ring}
\label{pre-burn}

The Burnside ring of a finite group will be denoted by $B(G)$, and by $\rac B(G)$ if we are taking coefficients in $\rac$. For a subgroup $U\leqslant G$, the ring homomorphism of fixed points $B(G)\rightarrow \mathbb{Z}$ will be denoted by $x\mapsto |x^U|$.

The following is Theorem 1 in \cite{lostres}, it describes the main property of the Frobenius-Wielandt morphism. The group $C$ is a cyclic group of order $|G|$.
 
\begin{teo}
\label{dsy}
There exists a ring homomorphism
\begin{displaymath}
\alpha^G:B(C)\rightarrow B(G),
\end{displaymath}
which we call the Frobenius-Wielandt homomorphism from the Burnside ring $B(C)$ of the cyclic group $C$ into the Burnside ring $B(G)$ of $G$ such that for every subgroup $U\leqslant G$ of $G$ and every $x\in B(C)$, the number $|\alpha (x)^U|$ of $U$-invariant elements in the virtual $G$-set $\alpha (x)$ coincides with the corresponding number $|x^{C_{|U|}}|$ of $C_{|U|}$-invariant elements in $x$, where, of course,
$C_{|U|}$ denotes the unique subgroup of order $|U|$ in $C$.
\end{teo}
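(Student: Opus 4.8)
The plan is to build $\alpha^G$ at the level of ghost rings and to reduce the entire statement to one divisibility fact. Recall that the mark homomorphism $B(G)\hookrightarrow\prod_{U\in[s_G]}\ent$, $x\mapsto(|x^U|)_U$, is injective and, by tom Dieck's theorem, its image is cut out by the congruences
\[ \sum_{gU\in N_G(U)/U}\varphi_{\langle U,g\rangle}\equiv 0\pmod{|N_G(U)/U|},\qquad U\leqslant G, \]
and similarly for $C$, whose subgroups are the $C_d$ for $d\mid n:=|G|$. First I would define a homomorphism between the two ghost rings by $(\tilde\alpha(a))_U=a_{|U|}$; this is well defined since $|U|\mid n$ and depends only on the class of $U$, and it is a ring homomorphism because it is the pullback of the coordinatewise operations along $U\mapsto C_{|U|}$. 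Once I know that $\tilde\alpha$ sends the image of $B(C)$ into the image of $B(G)$, injectivity of the marks makes it restrict to a ring homomorphism $\alpha^G\colon B(C)\to B(G)$, and the asserted property is immediate: if $a$ is the mark family of $x$, then $|\alpha^G(x)^U|=(\tilde\alpha(a))_U=a_{|U|}=|x^{C_{|U|}}|$.

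Thus everything reduces to checking the displayed congruences for the family $b_U:=a_{|U|}$ when $a$ comes from $B(C)$. Fixing $U$ and writing $W=N_G(U)/U$ and $u=|U|$, and using that $\langle U,g\rangle/U=\langle gU\rangle$, so that $|\langle U,g\rangle|=u\cdot\mathrm{ord}(gU)$, the congruence at $U$ becomes
\[ \sum_{\bar g\in W}a_{u\,\mathrm{ord}(\bar g)}\equiv 0\pmod{|W|}. \]
Since this is $\ent$-linear in $a$ and the transitive sets $C/C_d$ form a $\ent$-basis of $B(C)$, I would check it only for $a$ the mark family of $C/C_d$, namely $a_m=n/d$ for $m\mid d$ and $a_m=0$ otherwise. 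For such $a$ the sum vanishes unless $u\mid d$, in which case it equals $(n/d)\,f_W(d/u)$, where $f_W(k):=\#\{\bar g\in W:\bar g^{\,k}=1\}$. Putting $N'=n/u$ and $d''=d/u$, the statement to prove is the divisibility $|W|\mid (N'/d'')\,f_W(d'')$, where $d''\mid N'$ and $|W|\mid N'$.

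I expect this last divisibility to be the main obstacle, and its key input is Frobenius' theorem. Since the order of every element of $W$ divides $|W|$, one has $f_W(d'')=f_W((d'',|W|))$, and because $(d'',|W|)$ divides $|W|$ the classical Frobenius theorem gives $(d'',|W|)\mid f_W(d'')$. It then suffices to prove $|W|\mid (N'/d'')\,(d'',|W|)$, which is an elementary comparison of $p$-parts: for each prime $p$, writing $|W|_p=p^a$, $N'_p=p^b$ (so $a\leqslant b$) and $d''_p=p^c$ (so $c\leqslant b$), the exponent on the right is $(b-c)+\min(c,a)$, and one checks $a\leqslant(b-c)+\min(c,a)$ separately in the cases $c\leqslant a$ and $c>a$. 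This verifies all the congruences and so produces $\alpha^G$ together with its defining fixed-point property.
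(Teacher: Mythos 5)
Your argument is correct, and it is worth noting that the paper itself does not prove this statement: it is quoted verbatim as Theorem~1 of Dress--Siebeneicher--Yoshida \cite{lostres}, and the paper even remarks that the construction there ``depends on the use of a particular basis of $B(C)$'' rather than the transitive basis. Your route is genuinely different in presentation: you work entirely in the ghost rings, define the obvious coordinate pullback $(\tilde\alpha(a))_U=a_{|U|}$, and reduce existence to the tom Dieck/Dress congruences characterizing the image of the mark homomorphism, which you may then check on the transitive basis $[C/C_d]$ because the congruences are $\ent$-linear. All the steps verify: $\langle U,g\rangle/U=\langle gU\rangle$ holds because $g$ normalizes $U$; the mark vector of $C/C_d$ is as you state; the identity $f_W(d'')=f_W((d'',|W|))$ and the application of Frobenius' theorem to the divisor $(d'',|W|)$ of $|W|$ are correct; and the final $p$-adic inequality $a\leqslant(b-c)+\min(c,a)$ holds in both cases. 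The arithmetic heart is the same in both approaches --- Frobenius' divisibility theorem for $\#\{w:w^k=1\}$ in the Weyl groups $N_G(U)/U$ --- but your ghost-ring formulation buys transparency (the fixed-point property and the ring-homomorphism property are built in by construction, and uniqueness comes for free from injectivity of marks), at the cost of invoking the congruence characterization of the image of the mark morphism as a black box, whereas the original construction is more explicit about what $\alpha^G$ does to a specific basis of $B(C)$.
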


When referring to the Frobenius-Wielandt morphism in general, we will simply write FW morphism. The construction of this morphism in \cite{lostres} depends on the use of a particular basis of $B(C)$, that is, the usual basis of transitive $C$-sets is not used to define the FW morphism and, in general, one does not know the value of $\alpha^G$ at an element of the form $[C/D]$ for $D\leqslant C$. The following easy lemma explores this question.

\begin{lema}
\label{trans}
Let $G$ be a finite group, $C$ and $\alpha^G$ as in the previous theorem. For $D$ a subgroup of $C$ we have that $\alpha^G([C/D])=[G/N]$ if and only if $N$ is a subgroup of $G$ of order $|D|$ that satisfies the gcd property.
\end{lema}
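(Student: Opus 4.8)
The plan is to prove both implications by comparing fixed-point (mark) vectors, using that an element $y\in B(G)$ is determined by the integers $|y^U|$ as $U$ runs over $[s_G]$. Thus the identity $\alpha^G([C/D])=[G/N]$ is equivalent to the equalities $|\alpha^G([C/D])^U|=|(G/N)^U|$ for every $U\leqslant G$, and in the reverse direction I can extract the properties of $N$ by reading off these numbers. So the first step is to compute both marks explicitly. By Theorem \ref{dsy} we have $|\alpha^G([C/D])^U|=|(C/D)^{C_{|U|}}|$. Since $C$ is cyclic, the stabilizer of every point of $C/D$ is $D$, so $C_{|U|}$ fixes a coset precisely when $C_{|U|}\subseteq D$, which in a cyclic group happens exactly when $|U|$ divides $|D|$; in that case all $|C|/|D|$ cosets are fixed. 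Hence $|\alpha^G([C/D])^U|$ equals $|G|/|D|$ if $|U|\mid|D|$ and $0$ otherwise. On the other side, $|(G/N)^U|=\#\{gN : U\subseteq{}^{g}N\}$.

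For the \emph{if} direction, assume $|N|=|D|$ and that $N$ has the gcd property. Then $N$ is normal, so $U\subseteq{}^{g}N$ holds either for all $g$ or for none, which gives $|(G/N)^U|=|G|/|N|$ when $U\subseteq N$ and $0$ otherwise. By part (ii) of Lemma \ref{propiedad}, the condition $U\subseteq N$ is equivalent to $|U|\mid|N|=|D|$, so the two mark vectors agree and $\alpha^G([C/D])=[G/N]$ follows.

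For the \emph{only if} direction, assume the identity holds. Evaluating marks at $U=1$ gives $|G|/|D|=|G|/|N|$, hence $|N|=|D|$; then the computation above shows $|(G/N)^U|=|G|/|N|$ exactly when $|U|\mid|N|$, and $0$ otherwise. Taking $U=N$ yields $|(G/N)^N|=|G|/|N|=|G/N|$, so $N$ fixes every coset of $G/N$, forcing ${}^{g}N=N$ for all $g$, i.e. $N$ is normal in $G$. With $N$ normal, $|(G/N)^U|\neq 0$ is equivalent to $U\subseteq N$, and combining this with the mark formula gives the implication $|U|\mid|N|\Rightarrow U\subseteq N$, which is exactly condition (ii) of Lemma \ref{propiedad}, the gcd property.

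The one genuinely non-formal point is that in the \emph{only if} direction the mark $|(G/N)^U|$ counts cosets with $U\subseteq{}^{g}N$, so it detects subconjugacy rather than plain inclusion, and the mark data cannot be translated into statements about $U\subseteq N$ until the normality of $N$ is known. I therefore expect the crux to be the observation that substituting $U=N$ into the mark equation forces $N$ to be normal; once that is established, the rest is a direct appeal to the equivalences in Lemma \ref{propiedad}.
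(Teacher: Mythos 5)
Your proposal is correct and follows essentially the same route as the paper: both directions are settled by comparing marks, with the order of $N$ read off at $U=1$, normality extracted from the mark at $U=N$, and the gcd property obtained from the nonvanishing of $|(G/N)^U|$ for $|U|$ dividing $|N|$. Your write-up merely makes explicit the coset-counting computation of $|(G/N)^U|$ that the paper leaves implicit.
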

\begin{proof}
Suppose $\alpha^G([C/D])=[G/N]$ for some $N\leqslant G$. By taking the points fixed by the trivial group $1$, we have $|(G/N)^1|=[G:N]$, and  by the previous theorem
\begin{displaymath}
|\alpha^G([C/D])^1| =|(C/D)^1|=|C/D|,
\end{displaymath}
hence $N$ has order $|D|$. Now, by taking the points fixed by $N$ we have $|(G/N)^N|=[N_G(N):N]$, and on the other hand
\begin{displaymath}
\left|\left(\alpha^G([C/D])\right)^N\right| =\left| \left(C/D\right) ^{C_{|N|}}\right|=|C/D|.
\end{displaymath}
Hence $N$ is a normal subgroup of $G$. Finally, letting $H\leqslant G$ be such that $|H|$ divides $|N|$, we have as before $|(\alpha^G([C/D]))^H|=|C/D|$. So, in order to have $|(G/N)^H|=|C/D|$ we must have that $H$ is contained in $N$, and so $N$ satisfies the gcd property.

Now suppose that $N$ is a subgroup of order $|D|$ that satisfies the gcd property. Then $N$ is a normal subgroup of $G$ and for $K\leqslant G$, by Lemma \ref{propiedad}, $\left| \left(G/N\right)^K\right|$ is equal to $|G/N|=|C/D|$ if $|K|$ divides $|N|=|C|$ and zero otherwise.  But by Theorem \ref{dsy}, the same holds for $\left|\left(\alpha^G([C/D])\right)^K\right|$, hence $\alpha^G([C/D])=[G/N]$.
\end{proof}

\begin{rem}
\label{remtrans} 
This lemma gives another characterization of groups satisfying the gcd property: $G$ and $N$ satisfy the gcd property if and only if for every subgroup $K$ of $G$ we have
\begin{displaymath}
|(C/C_N)^{C_{|K|}}|=|(G/N)^K|.
\end{displaymath}
\end{rem}

\subsubsection*{Deflation and tensor induction on the Burnside ring}
\label{pre-deften}

We recall that given $G$ and $H$ groups, a $(G,\, H)$\textit{-biset} is a set $X$ with a left $G$-action and a right $H$-action such that $(gx)h=g(xh)$ for all $g\in G$, $x\in X$ and $h\in H$. It is well known that given a  $(G,\, H)$-biset $X$, one can define an additive morphism $B(X):B(H)\rightarrow B(G)$. The basic operations of induction, restriction, inflation and deflation of the Burnside ring can be recovered in this way, through appropriate bisets. This is part of the foundations of the theory of \textit{biset functors} (see for example Chapter 2 of \cite{boucbook} for more details). But, in Section 11.2 of \cite{boucbook} it is shown that given the biset $X$, one can also define a function $T_X:B(H)\rightarrow B(G)$ which is multiplicative, that is, such that $T_X(ab)=T_X(a)T_X(b)$ for $a,\,b\in B(H)$. The basic operations one obtains from this construction are the usual inflation and restriction (they are known to be multiplicative), but also the tensor induction and the fixed points operation. It is shown in Proposition 11.2.20 of \cite{boucbook} that this construction is functorial with respect to bisets.


Remember that for $N\trianglelefteq G$, the deflation of the Burnside ring is an additive morphism
\begin{displaymath}
\bdef_{G/N}^{\,G}:B(G)\rightarrow B(G/N),
\end{displaymath}
that sends a $G$-set $X$ to the set of equivalence classes $X/\sim$, under the relation $x\sim nx$, for $n\in N$ and $x\in X$. This set has a natural structure of $G/N$-set. On the other hand, if $H\leqslant G$, the tensor induction is a multiplicative map
\begin{displaymath}
\bten_H^G:B(H)\rightarrow B(G),
\end{displaymath}
built by means of Lemma 11.2.15 of \cite{boucbook}, that sends an $H$-set $X$ to Hom$_H(G^{op},\, X)$. This set consists of the maps $f:G\rightarrow X$ such that $f(gh)=h^{-1}f(g)$ for any $g\in G$ and $h\in H$. The group $G$ acts on this set by $gf(g_1)=f(g^{-1}g_1)$. 

We mention some properties of these operations we will use later on.

The following lemma is a particular case of Corollary 11.2.17 of \cite{boucbook}.
\begin{lema}
\label{ptosdef}
Let $G$ be a finite group and $H,\, K\leqslant G$. For $a\in B(H)$ we have
\begin{displaymath}
\left|\emph{Ten}_H^G(a)^K\right|=\prod_{g\in [K\backslash G/H]}\left|a^{K^g\cap H}\right|,
\end{displaymath}
where $[K\backslash G/H]$ is  a set of representatives of $K\backslash G/H$.
\end{lema}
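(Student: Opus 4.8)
The plan is to reduce the statement to genuine (non-virtual) $H$-sets, compute the $K$-fixed points directly from the description of $\bten_H^G$ as $X\mapsto \mathrm{Hom}_H(G^{op},X)$, and then extend the resulting identity to all of $B(H)$. Note first that both sides are multiplicative functions of $a$: the right-hand side visibly, and the left-hand side because $\bten_H^G$ is multiplicative and the fixed-point map $B(G)\rightarrow\ent$ is a ring homomorphism, so $a\mapsto|\bten_H^G(a)^K|$ is multiplicative. Thus it suffices to pin both multiplicative maps down on genuine sets.

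For a genuine $H$-set $X$ I would describe the $K$-fixed points explicitly. An element $f\in\bten_H^G(X)$ is a map $f:G\rightarrow X$ with $f(gh)=h^{-1}f(g)$, and $G$ acts by $(g_0f)(g)=f(g_0^{-1}g)$; hence $f$ is fixed by $K$ exactly when $f(kg)=f(g)$ for all $k\in K$ and $g\in G$, i.e.\ $f$ is constant on the left classes $K\backslash G$. Decomposing $G=\bigsqcup_{g\in[K\backslash G/H]}KgH$, such an $f$ is determined on each double coset $KgH$ by the single value $x=f(g)$, since $f(kgh)=h^{-1}x$. The well-definedness condition is that whenever $kgh=k'gh'$, forcing $hh'^{-1}\in K^g\cap H$, one has $(hh'^{-1})x=x$; that is, $x$ must lie in $X^{K^g\cap H}$. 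These choices are independent across the double cosets, so $|\bten_H^G(X)^K|=\prod_{g\in[K\backslash G/H]}|X^{K^g\cap H}|$, which is the asserted formula for genuine sets.

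To pass to virtual $a\in B(H)$ I would use the mark homomorphism. Both sides depend on $a$ only through the marks $(|a^L|)_{L\in[s_H]}$: the right-hand side manifestly, and the left-hand side because $\bten_H^G$ arises from the biset $G$ via the multiplicative construction of Section 11.2 of \cite{boucbook}, whose marks are polynomial in the marks of the argument. The two multiplicative expressions agree on the mark vectors of genuine $H$-sets, which contain the marks of the transitive sets together with all their non-negative combinations and hence fill a set of full dimension in $\prod_{L\in[s_H]}\rac$; two polynomial functions agreeing there agree identically. Alternatively, this is precisely the specialization to the biset $G$ of Corollary 11.2.17 of \cite{boucbook}, which gives the formula at once.

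The main obstacle is twofold. First, the conjugation bookkeeping must be kept straight so that the subgroup that appears is $K^g\cap H=g^{-1}Kg\cap H$ and not one of its variants; this is exactly where the direct computation above earns its keep. Second, one must justify the extension from genuine to virtual elements, since $\bten_H^G$ is only multiplicative and not additive, so one cannot argue on an additive basis and must instead route everything through the mark homomorphism (or invoke Bouc's general formula) as above.
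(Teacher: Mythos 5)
Your proposal is correct, but it does something genuinely different from the paper: the paper offers no proof at all for this lemma, simply quoting it as a particular case of Corollary 11.2.17 of \cite{boucbook}, whereas you supply a self-contained argument. Your fixed-point computation for a genuine $H$-set $X$ is right: a $K$-fixed $f\in\mathrm{Hom}_H(G^{op},X)$ is constant on left $K$-cosets, is determined on the double coset $KgH$ by $x=f(g)$ via $f(kgh)=h^{-1}x$, and the coincidences $kgh=k'gh'$ force exactly the condition $x\in X^{K^g\cap H}$ (every element of $K^g\cap H$ does arise from such a coincidence, since $K^g\cap H$ is precisely the stabilizer data of the double coset), giving the product formula on genuine sets. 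The one step where you are leaning on more than you prove is the passage to virtual elements: the claim that $a\mapsto\left|\bten_H^G(a)^K\right|$ is a polynomial function of the marks of $a$ is not obvious from multiplicativity alone and is essentially the content of Bouc's Lemma 11.2.15 and Corollary 11.2.17; once you grant that, your density argument (mark vectors of genuine sets span a Zariski-dense subset of $\prod_{L\in[s_H]}\rac$) does close the gap cleanly. Since you also offer the direct citation of Corollary 11.2.17 as an alternative, which is exactly the paper's route, there is no real gap; your version simply buys an explicit, elementary verification on genuine sets at the cost of having to address the extension problem that the citation sidesteps.
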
 

Regarding deflation, we will use the following results.

Recall that $\rac B(G)$ is a split semi-simple commutative $\rac$-algebra, its primitive idempotents are indexed, up to conjugation, by the subgroups of $G$. More precisely we have:

\begin{teo}[Theorem 2.5.2 in \cite{boucbook}]
Let $G$ be a finite group. If $H$ is a subgroup of $G$, denote by $e_H^G$ the element of $\rac B(G)$ defined by
\begin{displaymath}
e_H^G=\frac{1}{|N_G(H)|}\sum_{K\leqslant H}|K|\mu(K,\, H)[G:K]
\end{displaymath}
where $\mu$ is the M\"obius function of the poset of subgroups of $G$. Then $e_H^G=e_K^G$ if the subgroups $H$ and $K$ are conjugate in $G$, and the elements $e_H^G$, for $H\in[s_G]$, are the primitive idempotents of the $\rac$-algebra $\rac B(G)$. 
\end{teo}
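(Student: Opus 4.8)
The plan is to reduce the statement to a single fixed-point computation by means of the \emph{mark homomorphism}. Consider the $\rac$-algebra map
\begin{displaymath}
\phi:\rac B(G)\longrightarrow \prod_{H\in[s_G]}\rac,\qquad \phi(x)=\bigl(|x^H|\bigr)_{H\in[s_G]},
\end{displaymath}
each coordinate $x\mapsto|x^H|$ being a ring homomorphism. I would first recall the two standard facts that make $\phi$ an isomorphism: it is injective already over $\ent$, because a virtual $G$-set is determined by its tuple of fixed-point numbers, and both sides have $\rac$-dimension $|[s_G]|$, since the classes $[G/K]$ with $K\in[s_G]$ form a $\ent$-basis of $B(G)$. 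As $\rac$ has no idempotents other than $0$ and $1$, the primitive idempotents of the product ring $\prod_{[s_G]}\rac$ are exactly the indicator tuples $\delta_H$, equal to $1$ in the coordinate of $H$ and $0$ elsewhere. Hence it suffices to prove that $\phi(e_H^G)=\delta_H$; that is, for every $K\in[s_G]$,
\begin{displaymath}
|(e_H^G)^K|=\begin{cases}1 & \textrm{if } K \textrm{ is conjugate to } H,\\ 0 & \textrm{otherwise.}\end{cases}
\end{displaymath}
Once this holds, the $e_H^G$ are the primitive idempotents, and $e_H^G=e_K^G$ for conjugate $H,K$ follows at once from the injectivity of $\phi$, since conjugate subgroups give equal marks.

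The core of the argument is therefore the evaluation of
\begin{displaymath}
|(e_H^G)^K|=\frac{1}{|N_G(H)|}\sum_{L\leqslant H}|L|\,\mu(L,\,H)\,|(G/L)^K|.
\end{displaymath}
I would start from the elementary description $|(G/L)^K|=\#\{gL\in G/L : K^g\subseteq L\}$, coming from the fact that a coset $gL$ is $K$-fixed precisely when $g^{-1}Kg\subseteq L$. The key remark is that the condition $K^g\subseteq L$ depends only on the coset $gL$, so multiplying by $|L|$ turns the count of cosets into a count of group elements:
\begin{displaymath}
|L|\cdot|(G/L)^K|=\#\{g\in G : K^g\subseteq L\}.
\end{displaymath}
Substituting this, interchanging the two summations, and applying the defining relation $\sum_{K^g\leqslant L\leqslant H}\mu(L,\,H)=\delta_{K^g,\,H}$ of the M\"obius function on the interval of subgroups between $K^g$ and $H$, the sum collapses to
\begin{displaymath}
\sum_{L\leqslant H}|L|\,\mu(L,\,H)\,|(G/L)^K|=\#\{g\in G : K^g=H\}.
\end{displaymath}

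To finish, I would observe that this transporter set is empty unless $K$ and $H$ are conjugate in $G$, and that when they are, fixing one solution $g_0$ shows the set to be the coset $N_G(K)g_0$, of cardinality $|N_G(K)|=|N_G(H)|$. Dividing by $|N_G(H)|$ then yields exactly the claimed value, completing the computation and hence the theorem. The step I expect to require the most care is the passage from cosets to elements together with the M\"obius collapse: one must notice that $|L|\cdot|(G/L)^K|$ really counts the elements $g$ with $K^g\subseteq L$, because the containment is constant on each coset $gL$, and this is precisely what allows the relation $\sum_{K^g\leqslant L\leqslant H}\mu(L,\,H)=\delta_{K^g,\,H}$ to apply verbatim after exchanging the order of summation. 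Everything else --- injectivity of the mark homomorphism, the dimension count, and the shape of the idempotents in a finite product of copies of $\rac$ --- is routine.
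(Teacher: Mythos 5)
Your proof is correct, and the comparison is somewhat moot: the paper states this result as Theorem 2.5.2 of Bouc's book and explicitly declines to prove it, so there is no in-paper argument to measure against. What you give --- reduction via the mark homomorphism to showing $|(e_H^G)^K|=1$ or $0$, the passage from $|L|\cdot|(G/L)^K|$ to the count of elements $g$ with $K^g\subseteq L$, the M\"obius collapse to the transporter $\#\{g\in G: K^g=H\}=|N_G(H)|$ --- is exactly the standard proof (due to Gluck and Yoshida) found in the cited source, and every step checks out.
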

 
We refer to \cite{boucbook} for this theorem but of course, this result can be found in other sources. The action of bisets on these idempotents is described in Theorem 5.2.4 of \cite{boucbook}. Since we will only be interested in the action of deflation, we recall this particular case of the theorem.

\begin{lema}\label{ladef}
Le $H$ be a subgroup of $G$ and $N\trianglelefteq G$. Considering the idempotent $e_H^G$ as before, we have
\begin{displaymath}
\emph{Def}_{G/N}^{\,G}e_H^G=\frac{|N_G(HN)/HN|}{|N_G(H)/H|}m_{H,\, H\cap N}e_{HN/N}^{G/N}.
\end{displaymath}
The constant $m_{H,\, H\cap N}$ is defined in general for groups $K\trianglelefteq L$ as
\begin{displaymath}
m_{L,\, K}=\frac{1}{|L|}\sum_{XK=L}|X|\mu(X,\,L),
\end{displaymath}
where $\mu$ is the M\"obius function of the poset of subgroups of $L$.
\end{lema}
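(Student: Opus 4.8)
The statement is the specialization to deflation of Theorem 5.2.4 in \cite{boucbook}, so one legitimate route is simply to cite that theorem; the plan below instead sketches a direct verification using only the tools recalled above. Since $\bdef_{G/N}^{\,G}$ is additive and the elements $e_{L/N}^{G/N}$, for $N\leqslant L\leqslant G$ with $L/N$ running over $[s_{G/N}]$, form a $\rac$-basis of $\rac B(G/N)$, it suffices to compute the coefficients $\lambda_{L/N}$ in $\bdef_{G/N}^{\,G}e_H^G=\sum_{L/N}\lambda_{L/N}e_{L/N}^{G/N}$. Because $|(e_{M/N}^{G/N})^{L/N}|$ equals $1$ when $L/N$ is conjugate to $M/N$ in $G/N$ and $0$ otherwise, each coefficient is recovered as a single mark, $\lambda_{L/N}=|(\bdef_{G/N}^{\,G}e_H^G)^{L/N}|$. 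Thus the whole problem reduces to evaluating the linear functional $x\mapsto |(\bdef_{G/N}^{\,G}x)^{L/N}|$ at $e_H^G$.

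First I would make this functional explicit on transitive generators. For a $G$-set $X$, an element of $(\bdef_{G/N}^{\,G}X)^{L/N}$ is an $N$-orbit stabilized by $L$, that is, an $L$-orbit of $X$ on which $N$ acts transitively; extending linearly to $B(G)$ gives
\begin{displaymath}
\left|\left(\bdef_{G/N}^{\,G}[G/K]\right)^{L/N}\right|=\#\{g\in [L\backslash G/K]\mid N(L\cap {}^gK)=L\},
\end{displaymath}
since the $L$-orbit of $gK$ is isomorphic to $L/(L\cap {}^gK)$ and $N$ acts transitively on it exactly when $N(L\cap {}^gK)=L$. Substituting the defining formula for $e_H^G$ then yields
\begin{displaymath}
\lambda_{L/N}=\frac{1}{|N_G(H)|}\sum_{K\leqslant H}|K|\mu(K,\,H)\,\#\{g\in [L\backslash G/K]\mid N(L\cap {}^gK)=L\}.
\end{displaymath}

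The next step is to locate the support of this sum. The condition $N(L\cap {}^gK)=L$ forces $N\leqslant L\leqslant N\,{}^gK\leqslant {}^g(HN)$ (using $K\leqslant H$ and $N\trianglelefteq G$); together with the fact that a contributing $L$ is reached when $L\cap {}^gK$ is as large as possible, this should confine the nonzero coefficients, after M\"obius cancellation, to $L$ conjugate to $HN$ in $G$. Assuming this, I would set $L=HN$ and reorganize the double-coset-indexed sum: for $K\leqslant H$ the relevant double cosets should collapse onto the subgroups $X\leqslant H$ with $X(H\cap N)=H$, turning the inner M\"obius sum into $\frac{1}{|H|}\sum_{X(H\cap N)=H}|X|\mu(X,\,H)=m_{H,\,H\cap N}$, while the passage from $N_G(H)$ to $N_G(HN)$ produces the index factor $|N_G(HN)/HN|/|N_G(H)/H|$. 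As a sanity check, in the top case $H=G$ the only orbit is all of $G/K$, the condition becomes $NK=G$, and the coefficient is immediately $\frac{1}{|G|}\sum_{NK=G}|K|\mu(K,\,G)=m_{G,\,N}$, with index factor $1$, matching the definition of $m_{G,N}$.

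The main obstacle is precisely this last reorganization: showing that the double-coset count $\#\{g\mid N(L\cap {}^gK)=L\}$ vanishes off the conjugacy class of $HN$ and, on it, disentangles into the product of the normalizer-index factor and the M\"obius sum defining $m_{H,\,H\cap N}$. This is the genuinely combinatorial content of Theorem 5.2.4 of \cite{boucbook}; the cleanest packaging is probably to first reduce to the top case $H=G$ above, using the standard compatibilities among deflation, induction and conjugation together with the relation between $e_H^G$ and $e_H^{N_G(H)}$, and then bootstrap, rather than to attack the general double-coset sum head-on.
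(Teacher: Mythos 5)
The paper offers no argument for this lemma at all: it is stated as a direct specialization of Theorem 5.2.4 of \cite{boucbook}, so your fallback option of simply citing that theorem is exactly what the paper does. Your direct sketch, however, is not a proof as it stands. The setup is correct: recovering each coefficient as the mark $\lambda_{L/N}=|(\bdef_{G/N}^{\,G}e_H^G)^{L/N}|$ is legitimate, and your formula for $|(\bdef_{G/N}^{\,G}[G/K])^{L/N}|$ is right (the condition $N(L\cap{}^gK)=L$ is, by Dedekind's modular law, equivalent to $N\leqslant L\leqslant{}^g(NK)$, which confirms your containment chain). But the two claims that carry the entire weight of the lemma --- that after M\"obius cancellation $\lambda_{L/N}$ vanishes unless $L$ is conjugate to $HN$, and that $\lambda_{HN/N}$ factors as $\frac{|N_G(HN)/HN|}{|N_G(H)/H|}m_{H,\,H\cap N}$ --- are only asserted with ``should confine'' and ``should collapse,'' and you explicitly label them ``the main obstacle.'' That obstacle \emph{is} the theorem; a verification that stops there has not proved anything beyond what is visible from the definitions.

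Your suggested repair (reduce to the top case $H=G$, where you do correctly compute $\lambda_{G/N}=m_{G,N}$, by exploiting the relation between $e_H^G$ and $e_H^{N_G(H)}$ together with the compatibility of deflation with induction and conjugation) is a sound plan and is essentially the route taken in \cite{boucbook}. But note that it is not free of content either: deflation and induction do not commute naively, so one must invoke the correct biset relation $\bdef_{G/N}^{\,G}\bind_T^G=\bind_{TN/N}^{G/N}\circ(\textrm{iso})\circ\bdef_{T/T\cap N}^{\,T}$, and the passage from $e_H^{N_G(H)}$ to $e_H^G$ is where the normalizer-index factor actually arises. Until one of the two routes (head-on double-coset bookkeeping, or the reduction to $H=G$) is carried through, the proposal should be regarded as an outline with a genuine gap, and the honest proof of the lemma remains the citation.
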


We finish the section by mentioning some properties of the constants $m_{L,\,K}$.

\begin{lema}
\label{propiedadesm}
Let $G$ be a finite group.
\begin{itemize}
\item[i)] Suppose that $N$ and $M$ are normal subgroups of $G$ with $N\leqslant M$, then
\begin{displaymath}
m_{G,\, M}=m_{G,\,N}m_{G/N,\, M/N}.
\end{displaymath}
\item[ii)] The constant $m_{G,\, G}$ is different from zero if and only if $G$ is cyclic, and if $G$ is cyclic of order $n$, then $m_{G,\, G}=\varphi(n)/n$, where $\varphi$ is the Euler totient function.
\item[iii)] If $G$ is cyclic of order $n$ and $N$ is a subgroup of order $m$, then 
\begin{displaymath}
m_{G,\, N}=\frac{\varphi(n)}{m\varphi(n/m)}.
\end{displaymath}
\end{itemize}
\end{lema}
\begin{proof}
Points $i)$ and $ii)$ are propositions 5.3.1 and 5.6.1 in \cite{boucbook}, respectively. Point $iii)$ is easily obtained from $i)$ and $ii)$.
\end{proof}


\section{Functorial properties of the FW morphism}

Throughout this section $G$ denotes a finite group, $C$ denotes a cyclic group of order $|G|$ and if $H$ is a subgroup of $G$, for simplicity, we will denote by $C_H$ the subgroup of $C$ of order $|H|$. The FW morphism is denoted, as before, by $\alpha^G:B(C)\rightarrow B(G)$.

As said in the Introduction, regarding the operations of restriction and fixed points, it is shown in Section 5 of \cite{lostres} that the FW morphism commutes with both of them. It is also shown that the FW morphism commutes with inflation, $\binf_{G/N}^{\,G}$, if and only if $N$ and $G$ satisfy the gcd property. For induction and tensor induction we have analogous results.

\subsection{Induction and tensor induction}

\begin{lema}
Let $H$ be a subgroup $G$ and consider the induction morphism $\emph{Ind}_H^{\,G}:B(H)\rightarrow B(G)$. The following diagram commutes
\[
\xymatrix{
B(C_H)\ar[r]^{\alpha^H}\ar[d]_{\emph{Ind}_{C_H}^{\,C}}&B(H)\ar[d]^{\emph{Ind}_H^{\,G}}\\
B(C)\ar[r]_{\alpha^G}&B(G)
}
\]
if and only if $G$ and $H$ satisfy the gcd property. In particular $H$ must be a normal subgroup of $G$.
\end{lema}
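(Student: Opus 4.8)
The plan is to use that an element of $\rac B(G)$ is determined by its marks, so the square commutes if and only if, for every $x\in B(C_H)$ and every subgroup $K\leqslant G$, the integers $\bigl|\alpha^G(\bind_{C_H}^{\,C}(x))^K\bigr|$ and $\bigl|\bind_H^{\,G}(\alpha^H(x))^K\bigr|$ coincide. To compute both sides I would use the additive analogue of Lemma~\ref{ptosdef} for ordinary induction, namely the standard Mackey-type formula $\bigl|\bind_H^{\,G}(a)^K\bigr|=\sum_{g\in[K\backslash G/H]}\bigl|a^{K^g\cap H}\bigr|$, together with the defining property of the FW morphism in Theorem~\ref{dsy}.

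For the left-hand side, Theorem~\ref{dsy} turns it into a computation inside the cyclic group $C$: $\bigl|\alpha^G(\bind_{C_H}^{\,C}(x))^K\bigr|=\bigl|\bind_{C_H}^{\,C}(x)^{C_{|K|}}\bigr|$. Applying the induction formula in the abelian group $C$, where the double cosets are the cosets of $C_{|K|}C_H=C_{\mathrm{lcm}(|K|,|H|)}$ and $C_{|K|}\cap C_H=C_{(|K|,|H|)}$, this equals $\tfrac{|G|}{\mathrm{lcm}(|K|,|H|)}\,\bigl|x^{C_{(|K|,|H|)}}\bigr|$. For the right-hand side, the induction formula on $G$ gives a sum over $[K\backslash G/H]$ of the marks $\bigl|\alpha^H(x)^{K^g\cap H}\bigr|$, and Theorem~\ref{dsy} applied to $\alpha^H$ rewrites each of them as $\bigl|x^{C_{|K^g\cap H|}}\bigr|$, identifying the subgroup of $C_H$ of a given order with the subgroup of $C$ of that order. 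In this way both sides become $\rac$-linear combinations of the marks $\bigl|x^{C_d}\bigr|$ with $d\mid|H|$.

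Since the marks $\bigl|x^{C_d}\bigr|$ are linearly independent as functions of $x\in B(C_H)$, the square commutes if and only if these two combinations have equal coefficients at each $d$. On the left the only exponent occurring is $d=(|K|,|H|)$, while on the right the exponents are the orders $|K^g\cap H|$, each of which divides $(|K|,|H|)$. For the \emph{only if} direction, the coefficient at any proper divisor of $(|K|,|H|)$ must then vanish; since the double coset containing the identity contributes the exponent $|K\cap H|$, this forces $|K\cap H|=(|K|,|H|)$, and as $K$ is arbitrary we recover the gcd property. For the \emph{if} direction, the gcd property makes $H$ normal in $G$ by the remark following Lemma~\ref{propiedad}, so every $K^g\cap H$ has order $(|K|,|H|)$, every double coset $KgH$ has size $\mathrm{lcm}(|K|,|H|)$, and therefore $|K\backslash G/H|=|G|/\mathrm{lcm}(|K|,|H|)$; this is exactly the coefficient appearing on the left, so the two sides agree. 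The final assertion that $H$ is normal is part of this same remark.

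I expect the main obstacle to be the bookkeeping that presents both sides as linear functionals in the marks of $x$ and the clean extraction of the gcd property from the vanishing of the coefficients at proper divisors of $(|K|,|H|)$. The abelian double-coset count on the left and the double-coset size computation on the right, the latter relying on the normality of $H$, are the two places where one must take care.
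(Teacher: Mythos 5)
There is a genuine gap, and it sits at the very foundation of your computation: the formula you call the ``additive analogue of Lemma~\ref{ptosdef}'', namely $\left|\bind_H^{\,G}(a)^K\right|=\sum_{g\in[K\backslash G/H]}\left|a^{K^g\cap H}\right|$, is not the fixed-point formula for ordinary induction. The product over all double cosets with groups $K^g\cap H$ is special to \emph{tensor} induction; for additive induction the correct statement is
\begin{displaymath}
\left|\bind_H^{\,G}(a)^K\right|=\frac{1}{|H|}\sum_{\substack{g\in G\\ K^g\subseteq H}}\left|a^{K^g}\right|
=\sum_{\substack{g\in[K\backslash G/H]\\ K^g\subseteq H}}\left|a^{K^g}\right|,
\end{displaymath}
i.e.\ the sum runs only over those double cosets with $K^g$ entirely contained in $H$, and the group appearing is $K^g$, not $K^g\cap H$. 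A minimal counterexample to your formula: $H=1$, $K=G\neq 1$, $a$ a point; then $\bind_1^{\,G}(a)$ is the regular $G$-set, whose $G$-fixed points number $0$, while your sum gives $|a^1|=1$. Consequently both of your displayed evaluations are wrong whenever $|K|$ does not divide $|H|$ (your left-hand side is a nonzero multiple of $|x^{C_{(|K|,|H|)}}|$ where the true value is $0$), and the entire ``only if'' extraction collapses: the exponents $|K^g\cap H|$ for non-identity double cosets, on which you rely to force $|K\cap H|=(|K|,|H|)$, simply do not occur in the correct formula.

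Once the formula is corrected, your bookkeeping with linear combinations of marks $|x^{C_d}|$ for various divisors $d$ disappears, because for $a=\alpha^H(x)$ every surviving term $|a^{K^g}|$ equals $|x^{C_{|K|}}|$ by Theorem~\ref{dsy}; both sides of the square become a single scalar times $|x^{C_{|K|}}|$, namely $|(G/H)^K|\,|x^{C_{|K|}}|$ versus $|(C/C_H)^{C_{|K|}}|\,|x^{C_{|K|}}|$. Taking $x=[C_H/C_H]$ then shows commutativity is equivalent to $|(G/H)^K|=|(C/C_H)^{C_{|K|}}|$ for all $K\leqslant G$, which is exactly the characterization of the gcd property in Remark~\ref{remtrans}; this is the paper's argument. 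Your overall strategy (compare marks, specialize $x$) is sound and close in spirit to the paper's, but as written the proof does not go through because its central computational tool is the tensor-induction formula transplanted to additive induction.
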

\begin{proof}
Let $K$ be a subgroup of $G$. Recall, for example by  Proposition 2.2.1 of \cite{burnamscorr}, that for $a\in B(H)$ we have
\begin{displaymath}
|\bind_H^G(a)^K|=\sum_{\substack{g\in N_G(K)\backslash G/H\\K^g\subseteq H}}\left|\left(\bind_{N_{^gH}(K)/K}^{N_G(K)/K}({}^ga)\right)^K\right|,
\end{displaymath}
hence if $K$ is not contained in a conjugate of $H$, then $|\bind_H^G(a)^K|=0$. So, for $x$ in $B(C_H)$, observing that $^g(\alpha^H)=\alpha^{^gH}$, we have  by Theorem \ref{dsy} 
\begin{displaymath}
\left|\left(\bind_H^G\alpha^H(x)\right)^K\right|= |(G/H)^K||x^{C_K}|
\end{displaymath}
and
\begin{displaymath}
\left|\left(\alpha^G\bind_{C_H}^{\,C}(x)\right)^K\right|= |(C/C_H)^{C_K}||x^{C_K}|.
\end{displaymath}
If $G$ and $H$ satisfy the gcd property, then by Remark \ref{remtrans}, $|(C/C_H)^{C_K}|=|(G/H)^K|$ for every $K\leqslant G$, and we conclude that the diagram commutes. Now suppose that the diagram commutes and let $K\leqslant G$ be such that $|K|$ divides $|H|$.  Taking $x=[C_H/C_H]$ in the previous equalities, we have that $|(C/C_H)^{C_K}|=|(G/H)^K|$. But then $|(C/C_H)^{C_K}|=|(G/H)^K|$ for every $K\leqslant G$ and, again by Remark \ref{remtrans}, $G$ and $H$ satisfy the gcd property.
\end{proof}

The previous lemma can also be obtained, using Lemma \ref{trans}, from the results appearing in Section 5 of \cite{lostres}. Next we prove the corresponding result for tensor induction.

\begin{prop}
Let $H$ be a subgroup of $G$. 
The following diagram commutes
\[
\xymatrix{
B(C_H)\ar[r]^{\alpha^H}\ar[d]_{\emph{Ten}_{C_H}^C}&B(H)\ar[d]^{\emph{Ten}_H^G}\\
B(C)\ar[r]_{\alpha^G}&B(G)
}
\]
if and only if $G$ and $H$ satisfy the gcd property. In particular $H$ must be a normal subgroup of $G$.
\end{prop}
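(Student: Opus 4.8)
The plan is to mirror the proof of the induction lemma: I will compute the number of $K$-fixed points of each of the two composites $\bten_H^G\circ\alpha^H$ and $\alpha^G\circ\bten_{C_H}^C$, as functions of $K\leqslant G$ and $x\in B(C_H)$, and then compare them using the characterizations of the gcd property in Lemma \ref{propiedad}. Write $f(e)=|x^{(C_H)_e}|$ for each divisor $e$ of $|H|$, where $(C_H)_e$ is the subgroup of $C_H$ of order $e$. For the bottom-then-right composite I apply Lemma \ref{ptosdef} to $a=\alpha^H(x)\in B(H)$ and then Theorem \ref{dsy} for the group $H$: since each $K^g\cap H$ is a subgroup of $H$, I obtain
\[
\left|\left(\bten_H^G\alpha^H(x)\right)^K\right|=\prod_{g\in[K\backslash G/H]}\left|\alpha^H(x)^{K^g\cap H}\right|=\prod_{g\in[K\backslash G/H]}f(|K^g\cap H|).
\]
For the top-then-right composite I first use Theorem \ref{dsy} for $G$ to replace $K$-fixed points by $C_{|K|}$-fixed points, and then apply Lemma \ref{ptosdef} inside the abelian group $C$. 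Since $C$ is cyclic, every conjugate $(C_{|K|})^c$ equals $C_{|K|}$, the intersection $C_{|K|}\cap C_H$ is the subgroup of order $(|K|,\,|H|)$, and the number of double cosets is $[C:C_{|K|}C_H]=|G|/\operatorname{lcm}(|K|,\,|H|)$; hence
\[
\left|\left(\alpha^G\bten_{C_H}^C(x)\right)^K\right|=f\big((|K|,\,|H|)\big)^{\,|G|/\operatorname{lcm}(|K|,\,|H|)}.
\]

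For the \emph{if} direction, assume $G$ and $H$ have the gcd property. Then $H$ is normal, so each double coset $KgH$ equals $KHg$ and $|K\backslash G/H|=[G:KH]=|G|/\operatorname{lcm}(|K|,\,|H|)$ (using $|KH|=|K||H|/|K\cap H|=\operatorname{lcm}(|K|,\,|H|)$); moreover the gcd property gives $|K^g\cap H|=(|K^g|,\,|H|)=(|K|,\,|H|)$ for every $g$. Substituting these two facts into the first displayed formula turns it into the second one, so the two composites have the same $K$-fixed points for every $K$ and every $x$, whence they coincide and the diagram commutes.

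The delicate direction is \emph{only if}, and here the product in Lemma \ref{ptosdef} is the main obstacle: unlike the additive Frobenius formula used for induction, a product of fixed-point numbers does not let one read off the individual indices $|K^g\cap H|$ from a single evaluation. To get around this I will take $K$ to be a cyclic subgroup with $|K|$ dividing $|H|$, so that $(|K|,\,|H|)=|K|$ and $\operatorname{lcm}(|K|,\,|H|)=|H|$, and I will treat the numbers $f(e)$ as independent variables. Indeed, the mark homomorphism $B(C_H)\rightarrow\prod_{e\mid|H|}\ent$, $x\mapsto(f(e))_e$, is injective with image of finite index, hence Zariski dense in $\prod_{e}\rac$. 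The commutativity of the diagram forces the identity $\prod_{g\in[K\backslash G/H]}f(|K^g\cap H|)=f(|K|)^{|G|/|H|}$ to hold on this dense set, and therefore identically in the variables $(f(e))_e$. Comparing, for each divisor $e$, the exponent of the variable attached to $e$, the only way such a product can equal a power of the single variable $f(|K|)$ is to have $|K^g\cap H|=|K|$ for every $g$; in particular, taking $g=1$, we get $K\subseteq H$. Thus every cyclic $K$ with $|K|$ dividing $|H|$ is contained in $H$, which is condition iii) of Lemma \ref{propiedad}, so $G$ and $H$ have the gcd property, and the normality of $H$ follows from the observation after Lemma \ref{propiedad}.
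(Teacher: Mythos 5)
Your proof is correct. The \emph{if} direction and the overall setup (computing $K$-fixed points of both composites via Lemma \ref{ptosdef} and Theorem \ref{dsy}) coincide with the paper's argument. Where you genuinely diverge is in how you extract the gcd property from the resulting product identity in the \emph{only if} direction. The paper evaluates at the single test element $x=n[C_H/C_H]$, which collapses both sides to $n$ raised to the number of double cosets; equating those counts and combining the identity $\frac{|G|}{|K||H|}=\sum_{g\in[K\backslash G/H]}\frac{1}{|K^g\cap H|}$ with the bound $|K^g\cap H|\le(|K|,\,|H|)$ forces $|K^g\cap H|=(|K|,\,|H|)$ for every $g$ and every $K\leqslant G$ directly. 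You instead let $x$ vary, note that the mark vector $(f(e))_e$ sweeps out a finite-index, hence Zariski dense, sublattice, and conclude that the monomial identity must hold formally, so all exponents attached to divisors other than $|K|$ vanish; restricting to cyclic $K$ of order dividing $|H|$ and invoking condition iii) of Lemma \ref{propiedad} then yields the gcd property. Both mechanisms are sound: the paper's needs only one evaluation but leans on the double-coset counting formula, while yours trades that for the linear independence of marks and the reduction to cyclic subgroups. One cosmetic point: $g=1$ need not belong to the chosen set $[K\backslash G/H]$, but since $|K^g\cap H|$ is constant along a double coset, the representative of the coset $KH$ still gives $|K\cap H|=|K|$, so your conclusion $K\subseteq H$ stands.
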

\begin{proof}
Suppose $\alpha^G\bten_{C_H}^C=\bten_H^G\alpha^H$. Then for any $x\in B(C)$ and any $K\leqslant G$ we must have
\begin{displaymath}
\left| \left(\bten_H^G\alpha^H(x)\right)^K \right| =\left| \left(\alpha^G\bten_{C_H}^C(x)\right)^K\right|.
\end{displaymath}
By Theorem \ref{dsy}, the right-hand side of this equality is equal to $\left| \left(\bten_{C_H}^C(x)\right)^{C_K}\right|$ and so by Lemma \ref{ptosdef}, we have
\begin{displaymath}
\prod_{g\in[K\backslash G/H]}\left|\alpha^H(x)^{K^g\cap H}\right|=\prod_{t\in[C_K\backslash C/C_H]}\left|x^{C_K^t\cap C_H}\right|,
\end{displaymath}
which, again by Theorem \ref{dsy}, is equal to
\begin{displaymath}
\prod_{g\in[K\backslash G/H]}\left|x^{C_{K^g\cap H}}\right|=\prod_{t\in[C/C_KC_H]}\left|x^{C_K\cap C_H}\right|.
\end{displaymath}
Since this must hold for every $x$, let $n>1$ be an integer and take $x=n[C/C]$ in $B(C)$. Then the equality above gives
$n^{|K\backslash G/H|}=n^{|C/C_KC_H|}$, which implies
\begin{displaymath}
|K\backslash G/H|=\frac{|G|(|K|, \, |H|)}{|K||H|}.
\end{displaymath}
Finally, since 
\begin{displaymath}
\frac{|G|}{|K||H|}=\sum_{g\in [K\backslash G/H]}\frac{1}{|K^g\cap H|}
\end{displaymath}
and $|K^g\cap H|\leq (|K|,\, |H|)$ for every $g\in G$, for the equality above to hold we must have $|K^g\cap H|=(|K|,\, |H|)$ for every $g\in [K\backslash G/H]$. In particular $|K\cap H|=(|K|,\, |H|)$ for any $K\leqslant G$. This proves the \textit{only if} part of the statement. 

Now suppose that $|K\cap H|=(|K|,\, |H|)$ for every $K\leqslant G$. Since $H$ is a normal subgroup of $G$, we have $K\backslash G/H=G/KH$. Also $|KH|=|C_KC_H|$ and $|K^g\cap H|=|C_K\cap C_H|$ for any $g\in G$. This means that for any $x\in B(C)$ we have
\begin{displaymath}
\prod_{g\in[K\backslash G/H]}\left|x^{C_{K^g\cap H}}\right|=\prod_{t\in[C/C_KC_H]}\left|x^{C_K\cap C_H}\right|.
\end{displaymath}
Hence $\alpha^G\bten_{C_H}^C=\bten_H^G\alpha^H$.
\end{proof}

\subsection{Deflation}

In this section $G$ denotes a finite group and $N$ a normal subgroup of $G$. The rest of the notation remains the same. Since $C/C_N$ is isomorphic to $C_{G/N}$, we will identify $B(C/C_N)$ with $B(C_{G/N})$.

It is easy to see that the following diagram commutes
\[
\xymatrix{
B(C)\ar[r]^{\alpha^G}\ar[d]_{\bdef_{C/C_N}^{\,C}}&B(G)\ar[d]^{\bdef_{G/N}^{\,G}}\\
B(C_{G/N})\ar[r]_{\alpha^{G/N}}&B(G/N)
}
\]
if and only if the corresponding diagram with coefficients in $\mathbb{Q}$ commutes. The reason for taking coefficients in $\mathbb{Q}$ is that in $\mathbb{Q}B(G)$ we can consider the family of primitive idempotents $e_H^G$, for $H\in[s_G]$, defined in Section \ref{pre-burn}. So, in what follows, we will work with $\mathbb{Q}B(C)$ and $\mathbb{Q}B(G)$. For simplicity, we will continue writing the morphisms as $\alpha^G$ and $\bdef_{G/N}^{\,G}$. 

We begin with the following property.

\begin{prop}
\label{defsub}
If for $G$ and $N$ the FW morphism commutes with $\emph{Def}_{G/N}^{\,G}$, then for every subgroup $T$ of $G$ containing $N$, the FW morphism commutes with $\emph{Def}_{T/N}^{\,T}$.
\end{prop}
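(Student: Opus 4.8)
The plan is to descend the commutativity from the pair $(G,N)$ to the pair $(T,N)$ by restricting along $T\leqslant G$ and $T/N\leqslant G/N$, using that the FW morphism is already known to commute with restriction and that deflation and restriction obey a Mackey-type compatibility. I work over $\rac$, as this section already reduces the question to $\rac$-coefficients. Note first that $N\trianglelefteq G$ together with $N\leqslant T$ gives $N\trianglelefteq T$, and $|N|\mid|T|\mid|G|$ gives $C_N\leqslant C_T\leqslant C$; moreover $C_T/C_N$ is the cyclic group of order $|T/N|$ attached to $T/N$, so every morphism below is defined.

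The first step is a Mackey-type identity between deflation and restriction. Writing each composite as the biset it induces and composing bisets (Chapter~2 of \cite{boucbook}), the $(T/N,G)$-biset coming from $\bres^{G/N}_{T/N}\circ\bdef_{G/N}^{\,G}$ is ${}_{T/N}(G/N)_{G}$, while the one coming from $\bdef_{T/N}^{\,T}\circ\bres^G_T$ is ${}_{T/N}\big((T/N)\times_T G\big)_{G}$. Since $N$ is normal, $gN\mapsto Ng$ is a biset isomorphism $(T/N)\times_T G\cong G/N$, hence
\[
\bres^{G/N}_{T/N}\circ\bdef_{G/N}^{\,G}=\bdef_{T/N}^{\,T}\circ\bres^G_T .
\]
The same computation over the (abelian) cyclic groups yields $\bres^{C/C_N}_{C_T/C_N}\circ\bdef_{C/C_N}^{\,C}=\bdef_{C_T/C_N}^{\,C_T}\circ\bres^C_{C_T}$.

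The second step chains the known commutativities. Precomposing $\bdef_{T/N}^{\,T}\circ\alpha^T$ with $\bres^C_{C_T}$ and applying, in order, commutativity of $\alpha$ with restriction $G\to T$, the first Mackey identity, the hypothesis that $\alpha$ commutes with $\bdef_{G/N}^{\,G}$, commutativity of $\alpha$ with restriction $G/N\to T/N$, and the cyclic Mackey identity, one obtains
\[
\bdef_{T/N}^{\,T}\circ\alpha^T\circ\bres^C_{C_T}=\alpha^{T/N}\circ\bdef_{C_T/C_N}^{\,C_T}\circ\bres^C_{C_T}.
\]
The final and crucial step is to cancel $\bres^C_{C_T}$ on the right, for which I would prove that $\bres^C_{C_T}:\rac B(C)\to\rac B(C_T)$ is surjective. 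Since $C$ is cyclic, the restriction formula for idempotents (a case of Theorem~5.2.4 of \cite{boucbook}) gives $\bres^C_{C_T}(e_D^C)=e_D^{C_T}$ when $D\leqslant C_T$ and $0$ otherwise, so the image contains every primitive idempotent of $\rac B(C_T)$ and is therefore all of $\rac B(C_T)$. A surjection is right-cancellable, so the displayed equality forces $\bdef_{T/N}^{\,T}\circ\alpha^T=\alpha^{T/N}\circ\bdef_{C_T/C_N}^{\,C_T}$, which is exactly the commutativity of the FW morphism with $\bdef_{T/N}^{\,T}$. I expect the main obstacle to be the correct biset bookkeeping in the Mackey-type identity; once that is established the rest of the argument is formal.
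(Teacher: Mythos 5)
Your proposal is correct and follows essentially the same route as the paper: it chains the known commutativity of $\alpha$ with restriction, the deflation--restriction compatibility $\bres^{G/N}_{T/N}\circ\bdef_{G/N}^{\,G}=\bdef_{T/N}^{\,T}\circ\bres^G_T$, and the hypothesis, then cancels the surjective map $\bres^C_{C_T}$ on $\rac B(C)$. The only (inessential) differences are that you verify the Mackey-type identity by explicit biset composition where the paper cites the relations of Section 1.1 of \cite{boucbook}, and you prove surjectivity of $\bres^C_{C_T}$ via the idempotent formula rather than via $\bres_{C_T}^C\bind_{C_T}^C=\frac{|G|}{|T|}\mathrm{Id}$.
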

\begin{proof}
Consider the following diagram
\[
\xymatrix{
\rac B(C)\ar[r]^{\alpha^G}\ar[d]_{\bres_{C_T}^{C}}&\rac B(G)\ar[d]^{\bres_{T}^{G}}\\
\rac B(C_T)\ar[r]^{\alpha^{T}}\ar[d]_{\bdef_{C_T/C_N}^{\,C_T}}&\rac B(T)\ar[d]^{\bdef_{T/N}^T}\\
\rac B(C_{T/N})\ar[r]^{\alpha^{T/N}}&\rac B(T/N).
}
\]
Since the top diagram commutes, if we show that the exterior diagram commutes, then we will obtain
\begin{displaymath}
\bdef_{T/N}^T\alpha^T\bres_{C_T}^C=\alpha^{T/N}\bdef_{T/N}^T\bres_{C_T}^C.
\end{displaymath}
But by the Mackey formula we have $\bres_{C_T}^C\bind_{C_T}^C=\frac{|G|}{|T|}\textrm{Id}_{C_T}^{C_T}$, so $\bres_{C_T}^C$ is surjective on $\rac B(C_T)$,  hence we will have the result. 

By the relations appearing in Section 1.1 of \cite{boucbook}, we have 
\begin{displaymath}
\bres_{T/N}^{G/N}\bdef_{G/N}^{\,G}=\bdef_{T/N}^{\,T}\bres_T^G.
\end{displaymath}
So, since we are assuming that the FW morphism commutes with $\bdef_{G/N}^{\,G}$, and it commutes with restriction, we have that the exterior diagram commutes.
\end{proof}

Now let $e_D^C$ be one of the primitive idempotents in $\mathbb{Q}B(C)$. Since $\alpha^G$ is a ring homomorphism, $\alpha^G(e_D^C)$ must be a sum of primitive idempotents in $\mathbb{Q}B(G)$. To figure out which primitive idempotents appear in $\alpha^G(e_D^C)$, observe that if $K$ is a subgroup of $G$, then $|\alpha^G(e_D^C)^K|=|(e_D^C)^{C_K}|$, which is equal to 1 if $C_K=D$ and 0 otherwise, thus
\begin{displaymath}
\alpha^G(e_D^C)=\sum_{\substack{H\in [S_G]\\|H|=|D|}}e_H^G.
\end{displaymath}

Notice that if $G$ has no subgroups of order $|D|$, then $\alpha^G(e_D^C)=0$.

By Lemma \ref{ladef}, we have
\begin{equation}
\label{der}
\bdef_{G/N}^{\,G}\alpha^G(e_D^C)=\sum_{\substack{H\in [S_G]\\|H|=|D|}}t_{H,\, N}e_{HN/N}^{G/N},
\end{equation}
where 
\begin{displaymath}
t_{H,\, N}=\frac{|N_G(HN)/HN|}{|N_G(H)/H|}m_{H,\, H\cap N}.
\end{displaymath}

In a similar way  we obtain 
\begin{equation}
\label{izq}
\alpha^{G/N}\bdef_{C/C_N}^{\,C}(e_D^C)=\sum_{\substack{K/N\in [S_{G/N}]\\|K/N|=|DC_N/C_N|}}r_{D,\,C_N}e_{K/N}^{G/N},
\end{equation}
where
\begin{displaymath}
r_{D,\,C_N}=\frac{|D|}{|DC_N|}m_{D,\, D\cap C_N}.
\end{displaymath}

Hence, $\bdef_{G/N}^{\,G}\alpha^G=\alpha^{G/N}\bdef_{C/C_N}^{\,C}$ if and only if equations 1 and 2 coincide for every $D\leqslant C$. 

\begin{coro}
\label{corodefsub}
Suppose that for $G$ and $N$ the FW morphism commutes with $\emph{Def}_{G/N}^{\,G}$. Then for every subgroup $T$ of $G$ containing $N$, we have $m_{T,\, N}=m_{C_T,\, C_N}$.
\end{coro}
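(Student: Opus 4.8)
The plan is to reduce the problem to the group $T$ itself and then read off both sides of the commutativity condition at the top idempotent. First I would apply Proposition~\ref{defsub}: since the FW morphism commutes with $\bdef_{G/N}^{\,G}$, it commutes with $\bdef_{T/N}^{\,T}$ for every subgroup $T$ of $G$ containing $N$. As $N\trianglelefteq T$, the analogues of \eqref{der} and \eqref{izq} hold with $G$ replaced by $T$ and $C$ by $C_T$, and the commutativity forces these two expressions to agree term by term, as elements of $\rac B(T/N)$, for every subgroup $D$ of $C_T$.

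Next I would specialize to $D=C_T$, the whole group. In the analogue of \eqref{der}, the index set $\{H\in[S_T]\mid |H|=|D|=|T|\}$ reduces to $H=T$, and since $TN=T$ the sum collapses to $t_{T,\,N}\,e_{T/N}^{T/N}$. In the analogue of \eqref{izq}, the inclusion $C_N\leqslant C_T$ gives $DC_N=C_T$, so $|DC_N/C_N|=|T/N|$; the only subgroup $K/N$ of $T/N$ of that order is $T/N$ itself, and the sum collapses to $r_{C_T,\,C_N}\,e_{T/N}^{T/N}$. Equating the coefficients of $e_{T/N}^{T/N}$ gives $t_{T,\,N}=r_{C_T,\,C_N}$.

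It remains to evaluate the two coefficients. In $t_{T,\,N}=\frac{|N_T(TN)/TN|}{|N_T(T)/T|}\,m_{T,\,T\cap N}$ one has $N_T(T)=T$ and $TN=T$, so both normalizer quotients are trivial and $T\cap N=N$, whence $t_{T,\,N}=m_{T,\,N}$. In $r_{C_T,\,C_N}=\frac{|C_T|}{|C_T C_N|}\,m_{C_T,\,C_T\cap C_N}$ the equalities $C_T C_N=C_T$ and $C_T\cap C_N=C_N$ give $r_{C_T,\,C_N}=m_{C_T,\,C_N}$, and the corollary follows. I do not anticipate a genuine obstacle: the only point requiring care is the bookkeeping that every normalizer and index factor degenerates to $1$ at the top idempotent $D=C_T$, so that the two coefficients reduce cleanly to $m_{T,\,N}$ and $m_{C_T,\,C_N}$.
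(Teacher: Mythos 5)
Your proof is correct and follows essentially the same route as the paper: the paper evaluates equations \eqref{der} and \eqref{izq} at the top idempotent $e_C^C$ to get $m_{G,\,N}=m_{C,\,C_N}$ and then invokes Proposition~\ref{defsub}, whereas you invoke Proposition~\ref{defsub} first and then evaluate at $e_{C_T}^{C_T}$ for each $T$ --- the same idea with the two steps in the opposite order. Your explicit verification that the normalizer and index factors collapse to $1$ at the top idempotent is exactly the computation the paper leaves implicit.
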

\begin{proof}
Using the idempotent $e_C^C$ in equations 1 and 2, we see that if they coincide, we have $m_{G,\,N}=m_{C,\,C_N}$. The result follows from Proposition \ref{defsub}.
\end{proof}

With these results, we are ready to prove the following proposition.

\begin{prop}
\label{propdef}
If  for $G$ and $N$ the FW morphism commutes with $\emph{Def}_{G/N}^{\,G}$, then $G$ and $N$ satisfy the gcd property and $N$ is a cyclic central subgroup of $G$. 
\end{prop}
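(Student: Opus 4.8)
The plan is to work throughout with rational coefficients and the primitive idempotents $e_H^G$, exploiting the fact established just before the statement that $\bdef_{G/N}^{\,G}\alpha^G=\alpha^{G/N}\bdef_{C/C_N}^{\,C}$ holds if and only if the right-hand sides of $(\ref{der})$ and $(\ref{izq})$ agree for every $D\leqslant C$. Since the $e_{L/N}^{G/N}$ are linearly independent, this is an equality of coefficients, one for each conjugacy class of subgroups of $G/N$, and the two relevant constants are $t_{H,\,N}$ on the deflation side and $r_{D,\,C_N}$ on the cyclic side. A preliminary observation I would record is that for a \emph{cyclic} group $L$ and a subgroup $K\leqslant L$ a direct computation gives an explicit product formula for $m_{L,\,K}$ which is always strictly positive; in particular $m_{C_T,\,C_N}\neq 0$ for every $N\leqslant T\leqslant G$, and $r_{D,\,C_N}\neq 0$ for every $D$. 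Thus the cyclic side $(\ref{izq})$ always has full support: every subgroup $K/N$ of $G/N$ of order $|D|/(|D|,\,|N|)$ occurs, all with the same nonzero coefficient.

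I would deduce the cyclicity of $N$ first, as it is the cleanest. By Corollary $\ref{corodefsub}$ applied to $T=N$ we have $m_{N,\,N}=m_{C_N,\,C_N}$, and the latter is nonzero by the remark above. On the other hand $m_{N,\,N}=\frac{1}{|N|}\sum_{X\leqslant N}|X|\mu(X,\,N)$, and Möbius inversion identifies $\sum_{X\leqslant N}|X|\mu(X,\,N)$ with the number of generators of $N$, which vanishes unless $N$ is cyclic. Hence $m_{N,\,N}\neq 0$ forces $N$ to be cyclic.

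For the gcd property I would use criterion $iv)$ of Lemma $\ref{propiedad}$, so that it suffices to treat cyclic subgroups $H\leqslant G$. The point is that for cyclic $H$ the constant $m_{H,\,H\cap N}=m_{C_{|H|},\,C_{|H\cap N|}}$ is strictly positive, so $t_{H,\,N}\neq 0$ and $H$ genuinely contributes the idempotent $e_{HN/N}^{G/N}$, of order $|H|/|H\cap N|$, to $(\ref{der})$. Comparing with the support of $(\ref{izq})$ would force $|H|/|H\cap N|=|H|/(|H|,\,|N|)$, i.e. the gcd property for $H$, \emph{provided} this contribution is not cancelled by contributions of other subgroups of the same order mapping to the same idempotent. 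To control this I would reduce, via Proposition $\ref{defsub}$, to $T=HN$, for which $T/N$ is cyclic; then the idempotents $e_{L/N}^{T/N}$ are indexed by the divisors of $|T/N|$ with no collapsing of conjugacy classes, and the troublesome idempotent becomes the top one $e_{T/N}^{T/N}$. I expect the cancellation analysis to be the main obstacle: the competing subgroups $H'\leqslant T$ with $H'N=T$ and $|H'|=|H|$ need not be cyclic, so their $m_{H',\,H'\cap N}$ may be negative, and excluding an exact cancellation of the positive term coming from $H$ is the delicate point. I would handle it by choosing a cyclic $H$ of smallest order for which the gcd property fails and showing that any non-cyclic competitor $H'$ must contain a cyclic subgroup violating the gcd property of strictly smaller order, contradicting minimality.

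Finally, for centrality I would again reduce by Proposition $\ref{defsub}$, this time to $T=\langle g\rangle N$ for an arbitrary $g\in G$, so that $G$ may be assumed metacyclic with $G/N$ cyclic and $N=\langle x\rangle$ cyclic, still satisfying the gcd property and $m_{G,\,N}=m_{C,\,C_N}$. Centrality of $N$ is then equivalent to $g$ acting trivially on $N$. Supposing the conjugation action $x\mapsto x^{a}$ nontrivial, I would evaluate $m_{G,\,N}=\frac{1}{|G|}\sum_{XN=G}|X|\mu(X,\,G)$, using that $\mu(X,\,G)\neq 0$ forces $X$ to be an intersection of maximal subgroups: the nontrivial action produces extra complement-type subgroups $X$ with $XN=G$ whose contributions cancel the term $X=G$ and drive $m_{G,\,N}$ below the cyclic value $m_{C_{|G|},\,C_{|N|}}$, exactly as happens for $G=S_3$ and $N=C_3$. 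The explicit evaluation of $m_{G,\,N}$ for these metacyclic groups, and the verification that it differs from the cyclic value whenever the action is nontrivial, is the second place where I expect the substantive work to lie.
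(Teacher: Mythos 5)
There is a genuine gap. Your treatment of the cyclicity of $N$ is correct and complete: applying Corollary \ref{corodefsub} to $T=N$ gives $m_{N,N}=m_{C_N,C_N}\neq 0$, and the identification of $\sum_{X\leqslant N}|X|\mu(X,N)$ with the number of generators of $N$ (Hall's M\"obius-inversion argument) does force $N$ to be cyclic. But the other two conclusions --- the gcd property and centrality --- are exactly the places where you yourself flag that ``the substantive work lies,'' and in both cases the work is not done. For the gcd property, your support-comparison argument hinges on excluding a cancellation among the coefficients $t_{H',N}$ of non-conjugate subgroups $H'$ of the same order with $H'N=HN$; the proposed fix (a minimality argument producing a smaller cyclic violator inside a non-cyclic competitor) is not carried out and is not obviously available, since the competitors $H'$ all satisfy $|H'\cap N|=|H\cap N|$ and nothing immediately relates a failure of the gcd property for $H'$ against $N$ to one for a proper cyclic subgroup of $H'$. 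For centrality, you reduce to a metacyclic $T=\langle g\rangle N$ and assert that a nontrivial conjugation action drives $m_{T,N}$ away from the cyclic value $m_{C_T,C_N}$; this is plausible but is precisely an unproven computation, not a proof.

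The idea you are missing is the one the paper uses to make all three conclusions fall out of a single nonvanishing statement: Bouc's Theorem 5.4.11, which says that $m_{T,N}\neq 0$ implies $\beta(T)=\beta(T/N)$, together with the fact (Remark 5.6.2 of \cite{boucbook}) that $\beta(A)=1$ if and only if $A$ is cyclic. Taking $T=LN$ for an arbitrary cyclic subgroup $L\leqslant G$, the quotient $T/N\cong L/(L\cap N)$ is cyclic, so $\beta(T/N)=1$; Corollary \ref{corodefsub} gives $m_{T,N}=m_{C_T,C_N}>0$, hence $\beta(T)=1$ and $T=LN$ is itself cyclic. From this single fact one reads off everything at once: $N\leqslant T$ is cyclic, $|L\cap N|=(|L|,|N|)$ because $L$ and $N$ sit inside a common cyclic group (whence the gcd property by Lemma \ref{propiedad}~$iv)$), and $\langle x\rangle N$ cyclic for every $x\in G$ forces $N$ to be central. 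Your coefficient-by-coefficient analysis avoids the $\beta$-invariant but at the cost of exactly the two hard estimates you defer; without them the proof is incomplete.
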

\begin{proof}
 By the previous corollary, if $T$ is a subgroup of $G$ containing $N$ we have $m_{T,\, N}=m_{C_T,\, C_N}$. 

Now consider a cyclic subgroup $L$ of $G$ and take $T=LN$. Since $m_{T,\, N}=m_{C_T,\, C_N}$, in particular $m_{T,\, N} \neq 0$, by $iii)$ of Lemma \ref{propiedadesm}. 
Hence, $m_{T,\, T}$, which is equal to $m_{T,\, N}m_{T/N,\, T/N}$ by $i)$ of Lemma \ref{propiedadesm},
is also different from zero, since $T/N$ is a cyclic group. So, $T$ is a cyclic group too, by $ii)$ of Lemma \ref{propiedadesm}. Given that $L$ and $N$ are both contained in the cyclic group $T$, we obtain that $N$ is a cyclic group and that $L$ and $N$ satisfy $(|L|,\, |N|)=|L\cap N|$, so the gcd property holds for $N$ by $iv)$ of Lemma \ref{propiedad}. This also shows that for any $x\in G$, the elements of $N$ commute with $x$ (because $\langle x\rangle N$ is cyclic), hence $N$ is a central subgroup of $G$. 
\end{proof}

\begin{rem}
It is easy to prove, under the conditions of  the previous proposition, that $N$ must be contained in the intersection of the maximal cyclic subgroups of $G$, call it $M$. It is a question of Serge Bouc if $M$  is the maximal subgroup of $G$ satisfying the commutativity of the FW morphism with deflation. 
Unfortunately, proving that $M$ satisfies this commutativity seems to be too complicated.
\end{rem}

We finish the section by showing that if $G$ is a finite group containing a unique subgroup $N$ of order prime $p$, which is central, then it suffices to have $m_{T,\, N}=m_{C_T,\, C_N}$, for any subgroup $T$ of $G$ containing $N$, to conclude that the FW morphism commutes with $\bdef^{\,G}_{G/N}$.

\begin{lema}
Let $N\leqslant G$ be groups satisfying the gcd property with $N$ cyclic, and let $d$ be a divisor of $|G|$. Then $G$ has a subgroup of order $d$ if and only if $G/N$ has a subgroup of order $d/(d, \, |N|)$.
\end{lema}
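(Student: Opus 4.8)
The plan is to prove the two implications separately: the forward direction is immediate, while the backward direction I would handle by induction on $|N|$, reducing to the case where $N$ has prime order.

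For the forward direction, suppose $H\leqslant G$ has order $d$. The gcd property (Lemma \ref{propiedad}) gives $|H\cap N|=(|H|,\,|N|)=(d,\,|N|)$, so the image $HN/N\cong H/(H\cap N)$ is a subgroup of $G/N$ of order $d/(d,\,|N|)$. Nothing more is needed here.

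For the backward direction, suppose $G/N$ has a subgroup $\bar H=H/N$ of order $d/(d,\,|N|)$, with $N\leqslant H\leqslant G$. Every subgroup of $H$ is a subgroup of $G$ and $(H,\,N)$ still has the gcd property, so it suffices to look for the desired subgroup inside $H$. I would then induct on $|N|$, the case $|N|=1$ being trivial. For the inductive step, pick a prime $p\mid |N|$ and let $N_0$ be the unique subgroup of order $p$ of the cyclic group $N$; it is characteristic in $N$ and hence normal in $G$. The crucial bookkeeping is to check that both $(G,\,N_0)$ and $(G/N_0,\,N/N_0)$ again satisfy the gcd property with cyclic kernel. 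For $(G,\,N_0)$ I would argue that if $p\mid |K|$ then $p\mid (|K|,\,|N|)=|K\cap N|$, so the order-$p$ subgroup $N_0$ of the cyclic group $N$ lies in $K\cap N$; this yields $|K\cap N_0|=(|K|,\,p)$ for every $K\leqslant G$. For $(G/N_0,\,N/N_0)$ I would use characterization $ii)$ of Lemma \ref{propiedad}: if $K/N_0$ has order dividing $|N/N_0|$, then $|K|$ divides $|N|$, so $K\leqslant N$ and hence $K/N_0\leqslant N/N_0$. A prime-by-prime computation then shows that the orders compose correctly, i.e. that $d/(d,\,p)$ divided by $\bigl(d/(d,\,p),\,|N|/p\bigr)$ equals $d/(d,\,|N|)$. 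Applying the inductive hypothesis to $(G/N_0,\,N/N_0)$ with the divisor $d/(d,\,p)$ then converts the hypothesis on $G/N=(G/N_0)/(N/N_0)$ into the statement that $G/N_0$ has a subgroup of order $d/(d,\,p)$.

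It remains to settle the prime case and feed it back in: if $N_0$ has order $p$ and $G/N_0$ has a subgroup $H/N_0$ of order $d/(d,\,p)$, I claim $G$ has a subgroup of order $d$. Here $|H|=p\cdot d/(d,\,p)=\mathrm{lcm}(d,\,p)$. If $p\mid d$ this equals $d$ and $H$ itself is the sought subgroup; if $p\nmid d$, then $N_0$ is a normal subgroup of $H$ with $(|N_0|,\,[H:N_0])=1$, and Schur--Zassenhaus supplies a complement of order $d$. I expect the main obstacle to be precisely this packaging: verifying that the gcd property and the cyclicity of the kernel survive passage to $N_0$ and to $N/N_0$, and isolating the prime case where Schur--Zassenhaus (rather than the gcd property) does the real work. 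The order bookkeeping, though essential, is routine.
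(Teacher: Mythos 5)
Your proof is correct, but it takes a more roundabout route than the paper's. The paper's backward direction is a single application of Schur--Zassenhaus: let $N_1$ be the subgroup of the cyclic group $N$ of order $(d,|N|)$ (characteristic in $N$, hence normal in $G$); in $K/N_1$ the normal subgroup $N/N_1$ has order $b=|N|/(d,|N|)$ and index $a=d/(d,|N|)$ with $(a,b)=1$, so a complement of order $a$ exists and its preimage in $K$ has order $a|N_1|=d$. You instead induct on $|N|$, peeling off one prime at a time and invoking Schur--Zassenhaus only in the prime case; the price is the bookkeeping you correctly identify as the main burden, namely checking that the gcd property and cyclicity descend to $(G,N_0)$ and $(G/N_0,N/N_0)$ and that the orders compose. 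All of those verifications go through as you sketch them, so the argument is sound, but they are needed only because your induction hypothesis carries the gcd property along. It is worth noticing that neither your prime case nor the paper's one-step argument actually uses the gcd property in the backward direction --- only that $N$ is cyclic and normal --- so if you formulated the induction hypothesis accordingly (or simply took $N_1$ of order $(d,|N|)$ at the outset, as the paper does), the entire gcd-preservation discussion could be dropped. The gcd property is genuinely needed only in the forward direction, where your argument coincides with the paper's.
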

\begin{proof}
It is clear that if $H$ is a subgroup of $G$ of order $d$, then $HN/N$ is a subgroup of $G/N$ or order $d/(d,\, |N|)$.

Now suppose that $G/N$ has a subgroup $K/N$ of order $a=d/(d,\, |N|)$ and let $N_1$ be the subgroup of $N$ of order $(d,\, |N|)$. Then $N_1$ is normal in $G$ and $K/N_1$ contains $N/N_1$, which is a normal subgroup of order $b=|N|/(d,\, |N|)$. Now, the index of $N/N_1$ in $K/N_1$ is $a$ and we have $(a,\, b)=1$. Hence, by the Schur-Zassenhaus Theorem, $K/N_1$ contains a subgroup of order $a$, and $K$ has a subgroup of order $d$.
\end{proof}

\begin{lema}
\label{paraejem}
Let $G$ be a finite group such that for a prime $p$, $G$ has a unique subgroup $N$ of order $p$ that is central. Then if $m_{T,\, N}=m_{C_T,\, C_N}$ for any subgroup $T$ of $G$ containing $N$, we have that the FW morphism commutes with $\emph{Def}^{\,G}_{G/N}$. 
\end{lema}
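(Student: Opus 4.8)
The plan is to show that equations \eqref{der} and \eqref{izq} coincide for every $D\leqslant C$; by the discussion preceding the statement, this is exactly the commutativity we want. Fix $D\leqslant C$ and put $d=|D|$. Since $N$ is cyclic of prime order $p$ and central, for every $H\leqslant G$ the intersection $H\cap N$ is either trivial or all of $N$, and it equals $N$ precisely when $p$ divides $|H|$ (by Cauchy's theorem together with the uniqueness of the order-$p$ subgroup). I would therefore split the argument according to whether $p\mid d$ or $p\nmid d$, and in each case match the idempotents $e_{HN/N}^{G/N}$ appearing on the two sides together with their coefficients $t_{H,N}$ and $r_{D,C_N}$. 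The bijections set up below also show that the two index sets are empty together, so the case where $G$ has no subgroup of order $d$ needs no separate treatment.

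Case $p\mid d$: here every $H\leqslant G$ of order $d$ contains $N$, so $HN=H$, $H\cap N=N$, and $t_{H,N}=m_{H,N}$, while the idempotent is $e_{H/N}^{G/N}$ with $|H/N|=d/p$. The map $[H]\mapsto[H/N]$ is a bijection from conjugacy classes of order-$d$ subgroups of $G$ containing $N$ onto conjugacy classes of order-$(d/p)$ subgroups of $G/N$, which is exactly the index set of \eqref{izq} (as $|DC_N/C_N|=d/p$). On the other side $r_{D,C_N}=m_{D,C_N}$, and the hypothesis gives $m_{H,N}=m_{C_H,C_N}=m_{D,C_N}$, since $C_H=D$ is the unique subgroup of order $d$ in the cyclic group $C$. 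Hence the two expressions agree term by term.

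Case $p\nmid d$: now every order-$d$ subgroup $H$ meets $N$ trivially, so $m_{H,H\cap N}=m_{H,1}=1$ and $t_{H,N}=|N_G(HN)/HN|\big/|N_G(H)/H|$, while $e_{HN/N}^{G/N}$ has order $d$; and $r_{D,C_N}=\tfrac1p$. The key step is a $G$-equivariant bijection between the order-$d$ subgroups $H$ of $G$ and the subgroups $L$ of $G$ with $N\leqslant L$ and $|L|=pd$, given by $H\mapsto HN$. Surjectivity follows from Schur--Zassenhaus: for such an $L$, since $N$ is normal in $L$ with $(|N|,|L/N|)=1$, there is a complement of order $d$. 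Uniqueness of the complement, and the fact that it is characteristic in $L$, follow because $N$ is central, so $L=N\times H$ and the complements form a torsor under $\mathrm{Hom}(L/N,N)=1$. Consequently each order-$d$ subgroup $L/N$ of $G/N$ equals $HN/N$ for a single conjugacy class $[H]$. Finally, $H$ being characteristic in $L$ gives $N_G(L)\subseteq N_G(H)$, while $N$ being normal in $G$ gives $N_G(H)\subseteq N_G(HN)=N_G(L)$; thus $N_G(H)=N_G(L)$ and $t_{H,N}=|H|/|L|=1/p=r_{D,C_N}$. This matches \eqref{der} and \eqref{izq} in this case as well, completing the proof.

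I expect the $p\nmid d$ case to be the main obstacle: one must simultaneously identify which idempotents $e_{HN/N}^{G/N}$ occur and with what multiplicity, which forces the bijection $H\leftrightarrow HN$ and the uniqueness of the Hall $p'$-complement in $L$, and evaluate the normalizer quotient $t_{H,N}$ exactly, where the equality $N_G(H)=N_G(L)$ is what makes the coefficient collapse to $1/p$. By contrast, the $p\mid d$ case is essentially a bookkeeping translation through the hypothesis $m_{T,N}=m_{C_T,C_N}$.
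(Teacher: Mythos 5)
Your proposal is correct and follows essentially the same route as the paper: the same split into the cases $p\mid d$ and $p\nmid d$, the same term-by-term matching of the idempotents $e_{HN/N}^{G/N}$ and of the coefficients $t_{H,N}$ and $r_{D,C_N}$, with the hypothesis $m_{T,N}=m_{C_T,C_N}$ absorbing the first case and the normalizer identity $N_G(H)=N_G(HN)$ the second. The only differences are cosmetic: you fold the ``no subgroup of order $d$'' case into your bijections rather than invoking the preceding Schur--Zassenhaus lemma, and you spell out the uniqueness of the complement where the paper says ``this is clear.''
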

\begin{proof}
We continue to use the notation of Proposition \ref{propdef}.

Let $D$ be a subgroup of $C$ of order $d$. By the previous lemma, if $G$ does not have a subgroup of order $d$, then $G/N$ does not have a subgroup of order $DC_N/C_N=d/(d,\, |N|)$. Then by equations \ref{der} and \ref{izq}, we have
\begin{displaymath}
\bdef_{G/N}^{\,G}\alpha^G(e_D^C)=0=\alpha^{G/N}\bdef_{C/C_N}^{\,C}(e_D^C).
\end{displaymath}
Hence, it remains to consider the idempotents $e_D^C$ such that $G$ has a subgroup of order $d$. Suppose first that $p$ does not divide $d$. In this case, for $H\leqslant G$ of order $d$ we have $m_{H,\, H\cap N}=1=m_{D,\, D\cap C_N}$. Also, as in the proof of the previous lemma, for any $K/N$ appearing in $[s_{G/N}]$ with $|K/N|=|DC_N/C_N|$, we have that $K=HN$ for $H$ a subgroup of order $d$. Finally, it is clear that if $H$ is in $[s_G]$, then we can chose $[s_{G/N}]$ containing $HN/N$.  This means that, in order to have equations 1 and 2 coincide, it suffices to have
$N_G(HN)=N_G(H)$ for any $H$ of order $d$, but this is clear.

Now suppose that $p$ divides $d$. In this case, any group $T$ of order $d$ contains $N$, and we easily have that if $T$ is in $[s_G]$ then we can chose $[s_{G/N}]$ containing $T/N$. Hence, in order to have equations 1 and 2 coincide, it suffices to have $m_{T,\, N}=m_{C_T,\, C_N}$.
\end{proof}

\begin{ejem}
The following two cases satisfy the conditions of  the previous lemma  with $m_{T,\, N}=1=m_{C_T,\, C_N}$ for every $T\leqslant G$ containing $N$. The reason is that in both cases, the group $N$ is contained in $\Phi (G)$, thus by Example 5.2.3 of \cite{boucbook}, $m_{T,\, N}=1$, and $m_{C_T,\, C_N}=1$ by Lemma \ref{propiedadesm}. 

Notice that one of the groups $G$ is solvable and the other one is not.
\begin{itemize}
\item[i)] $G=SL(2,\, 5)$ and $N=Z(G)\cong C_2$. 
\item[ii)] Let $G$ be the dicyclic group of order $4m$ and $N=Z(G)\cong C_2$. Recall that $G$ has a presentation
\begin{displaymath}
G=\langle a,\, b\mid a^{2m}=1,\, b^2=a^m,\, bab^{-1}=a^{-1}\rangle.
\end{displaymath}
and that $N=\langle a^m\rangle$ is the only subgroup of order $2$ of $G$. 
\end{itemize}
\end{ejem}


If $G$ and $N$ are as in Lemma \ref{paraejem}, then $N$ is a minimal abelian normal subgroup of any $T\leqslant G$ containing $N$.  Under these hypothesis, Proposition 5.6.4 of \cite{boucbook} provides a formula for $m_{T,\, N}$ that could help to determine when it coincides with $m_{C_T,\, C_N}$. This would certainly provide other examples for which the FW morphism commutes with deflation.

\section*{Acknowledgments}

Many thanks to Serge Bouc, for without his knowledge of \textit{les \`ems}, this note would not exist, and thanks to the referee for the useful suggestions.


\begin{thebibliography}{1}

\bibitem{foncteurs}
Serge Bouc.
\newblock Foncteurs d'ensembles munis d'une double action.
\newblock {\em Journal of Algebra}, 183:664--736, 1996.

\bibitem{burnamscorr}
Serge Bouc.
\newblock {B}urnside rings.
\newblock {\em Handbook of algebra}, 2:739--804, 2000.

\bibitem{boucbook}
Serge Bouc.
\newblock {\em Biset functors for finite groups}.
\newblock Springer, Berlin, 2010.

\bibitem{miguel}
Jos\'e~Miguel Calder\'on~Le\'on.
\newblock {\em Algunas aplicaciones del anillo de {B}urnside, {T}esis de
  licenciatura}.
\newblock Universidad de Guanajuato, Mexico, 2017.

\bibitem{lostres}
A.~Dress, C.~Siebeneicher, and T.~Yoshida.
\newblock An application of {B}urnside rings in elementary finite group theory.
\newblock {\em Advances in Mathematics}, 91:27--44, 1992.

\bibitem{gore}
Daniel Gorenstein.
\newblock {\em Finite groups}.
\newblock Harper \& Row, N.Y., 1967.

\end{thebibliography}

\end{document}